\newtheorem{theorem}{\bf Theorem}[section]
\newtheorem{proposition}{\bf Proposition}[section]
\newtheorem{lemma}{\bf Lemma}[section]
\newtheorem{definition}{\bf Definition}[section]
\newtheorem{example}{\bf Example}[section]
\newtheorem{remark}{\bf Remark}[section]
\newcommand{\beq}{\begin{equation}}
\newcommand{\eeq}{\end{equation}}
\newcommand{\beqn}{\begin{eqnarray}}
\newcommand{\eeqn}{\end{eqnarray}}
\newcommand{\bear}{\begin{array}}
\newcommand{\eear}{\end{array}}
\newcommand{\beit}{\begin{itemize}}
\newcommand{\eeit}{\end{itemize}}
\newcommand{\beqno}{\begin{eqnarray*}}
\newcommand{\eeqno}{\end{eqnarray*}}
\renewcommand{\b}{\beta}
\title{The total variation flow perturbed by gradient linear multiplicative noise}
\date{}
\numberwithin{equation}{section}
\begin{document}
\maketitle

\centerline{\scshape Michael R$\ddot{o}$ckner}
\medskip
{\footnotesize
 \centerline{Fakultat fur Mathematik, Universitat Bielefeld  
}
     \centerline{D-33501 Bielefeld, Germany}
\centerline{e-mail: roeckner@math.uni-bielefeld.de}} 

\medskip

 \centerline{\scshape Ionu\c t Munteanu}
\medskip
{\footnotesize
  \centerline{Alexandru Ioan Cuza University of Ia\c si, Department of Mathematics }
   \centerline{Blvd. Carol I, no.11, 700506-Ia\c si, Romania}

   }
   {\footnotesize
  \centerline{Octav Mayer Institute of Mathematics, Romanian Academy }
   \centerline{Blvd. Carol I, no.8, 700505-Ia\c si, Romania}
\centerline{e-mail: ionut.munteanu@uaic.ro}
   }

\medskip
\noindent \textbf{Abstract}.We consider stochastic non-linear diffusion equations with a highly singular diffusivity term and multiplicative  gradient-type noise. We study existence and uniqueness  of non-negative variational solutions in terms of stochastic variational inequalities. We also show extinction in finite time with probability one. These kind of equations arise, e.g. in the use for simulation of image restoring techniques or for modelling turbulence.
\section{Introduction of the model}
We are concerned here with equations of the form
\begin{equation}\label{ioi20}\left\{\begin{array}{l}dX(t)=\text{div}[\text{sgn}(\nabla X(t))]dt+\frac{1}{2}\text{div}[\textbf{b}^\mathcal{T}\textbf{b}\nabla X(t))]dt+\left<\textbf{b}\nabla X(t),d\beta(t)\right>_{\mathbb{R}^N}\text{ in }(0,T)\times \mathcal{O},\\
X(t)=0 \text{ on }(0,T)\times \partial\mathcal{O},\\
X(0)=x \text{ in }\mathcal{O}.\end{array}\right.\ \end{equation}Here, $\mathcal{O}\subset \mathbb{R}^d,\ d\in\mathbb{N}^*$, is a bounded open domain with the boundary $\partial\mathcal{O}$ of class $C^3$; the multivalued function $\text{sgn}$ is given by
$$\text{sgn}(x)=\left\{\begin{array}{cc}\frac{x}{|x|}, & \text{ for } x\neq 0,\\
\left\{\xi\in\mathbb{R}^d:\ |\xi|\leq 1\right\}, & \text{ for } x=0.
\end{array}\right.\ $$ Further,  $N\in\mathbb{N}^*,\ b_i:\mathbb{R}^d\rightarrow \mathbb{R}^d,\ 1\leq i\leq N,$ and 
$$\textbf{b}=\left(\begin{array}{c}b_1\\b_2\\...\\b_N\end{array}\right)\in \mathbb{R}^{N\times d};$$ and $\beta=(\beta_1,\beta_2,...,\beta_N)$ denotes an $N-$dimensional Brownian motion on a filtered probability space $(\Omega,\mathcal{F},\ \left\{\mathcal{F}_t\right\}_{t\geq0},\mathbb{P})$. Here, $\textbf{b}^\mathcal{T}$ stands for the transpose of the matrix $\textbf{b}$. Finally, the initial data $x\in L^2(\mathcal{O})$.

To illustrate this problem let us consider the following  partial differential equation
\begin{equation}\label{e21}\partial_t X(t)=\text{div}[\text{sgn}(\nabla X(t))]+v\cdot\nabla X(t),\ \text{ in } [0,\infty)\times \mathcal{O},\end{equation}which arises, e.g., in material science, see \cite{1d}.   The function $X$ can be interpreted as a density of a substance diffusing in a continuum, moving with a velocity $v$.  When turbulence occurs it is difficult to   determine $v$ precisely, so, one should consider the random velocity field $v$:
$$v(t,\xi)=\sum_{i=1}^N b_i(\xi)\frac{d\beta_i(t)}{dt}.$$ (For further details, see \cite{brez} and the references therein). Plugging this velocity into  (\ref{e21}), we arrive to the following Stratonovich equation
\begin{equation}\label{e22}d X(t)=\text{div}[\text{sgn}(\nabla X(t))]dt+\sum_{i=1}^N\left[b_i(t)\cdot\nabla X(t)\right]\circ d\beta_i(t),\ \text{ in } [0,\infty)\times \mathcal{O},\end{equation}that is our It\^o equation (\ref{ioi20}). So, for modelling turbulence in the flux of a diffusing material, one should perturb the continuity equation by a gradient Stratonovich noise, as above ( see \cite{1a,1b,1c}). Similar kind of equations as (\ref{ioi20}), with multiplicative gradient-type noise, have been considered for example in \cite{m}, for modelling turbulence in the Navier-Stokes equations, or in \cite{mag}, for the Magnetohydrodynamic equations. 

Besides this, such equations arise in image processing techniques in \cite{six, wang}, where the authors show  that considering gradient dependent noise, the numerical simulation results prove that the solution of this model improve the solution obtained by the TV regularization. Other examples, and moreover, further details on the complexity of the present subject can be found in \cite{bok}. Finally, it should be emphasized that this paper solves an open problem addressed in \cite{br1,br2}.

Due to its high singularity, equation (\ref{ioi20}) does not have a solution in the standard sense for all $L^2(\mathcal{O})-$initial solutions, i.e., as an It\^o integral equation. That is why, we shall reformulate  it in the framework of stochastic variational inequalities (see Definition \ref{d1} below). In this paper, we prove the existence and uniqueness of variational solutions to (\ref{ioi20}) (see Theorem \ref{t1} below). In the literature, there are some results of this type for similar models, namely,  for the  non-linear diffusion equation
$$\partial_t X(t)=\text{div}[\text{sgn}(\nabla X(t))],$$ perturbed by an additive  continuous noise $dW(t)$, in \cite{b4}; and perturbed by a multiplicative noise $X(t)d W(t)$, in \cite{b3}. A recent preprint  \cite{ciotir} is dealing with a similar equation as (\ref{ioi20}), but with Neumann boundary conditions, whereas we consider Dirichlet boundary conditions. Also, their approach is different from ours. However, we contacted the authors and they say that the paper is still  under revision, since there are  some issues that need to be solved.

To achieve our goal, we further develop the ideas in \cite{b3}. But, there are some important differences, since, unlike \cite{b3}, here we have a gradient-type multiplicative noise. We approximate equation (\ref{ioi20}) by equation (\ref{e}) below (namely, we replace the multi-valued function $sgn$ with its Yosida approximation), and show the existence and uniqueness for it. To this end, by scaling $Y_\lambda=e^{-\sum_i \beta_i B_i}X_\lambda$, we rewrite it equivalently as the random  deterministic equation   (\ref{ee9}). As mentioned in \cite{b3}, this equivalent reformulation of (\ref{e}) is crucial for the uniqueness part. Besides this, in the present case, it turns out that it is also crucial for obtaining the mandatory $H^2(\mathcal{O})$-regularity of the approximation solution. Roughly speaking, when trying to prove such a strong regularity, one must assume a commutativity between the operators $B_i,\ i=1,...,N$ (introduced in (\ref{momo1}) below) and the resolvent of the Dirichlet Lapalcian  $J_\epsilon$ (introduced in (\ref{Ae}) below), i.e., $J_\epsilon B_i=B_i J_\epsilon$ for all $i=1,...,N$. This kind of hypothesis has been, e.g.,  employed in \cite{ciotir}. But, this  is unlikely to hold because of the difference between the ranges of the operators involved (more precisely, $B_iu\in L^2(\mathcal{O})$, while $J_\epsilon u\in H^2(\mathcal{O})\cap H_0^1(\mathcal{O}),\ u\in H_0^1(\mathcal{O})$). Instead, we assume that the group generated by $B_i$  (defined in (B3) below) commutes with $J_\epsilon$, that is exactly hypothesis $\mathbf{(H_\Delta)}$ below. This is more natural due to the fact that $e^{sB_i}$ preserves $H^2(\mathcal{O})\cap H_0^1(\mathcal{O}).$ This leads to the next approach: firstly to show the $H^2-$ regularity of the scaled variable $Y_\lambda$, then, after showing the equivalence, deduce the $H^2-$regularity for $X_\lambda$. In Example \ref{ex} below we give nontrivial examples of $b_i,\ i=1,...,N$ such that hypothesis $\mathbf{(H_\Delta)}$ holds. We stress that, in our case,  our results are stronger than the corresponding ones in \cite{b3}, in the situation considered there, because, here we obtain pathwise existence and uniqueness (see Definition \ref{d1} and Theorem \ref{t1} below). This is a consequence of the fact that the It\^o's formula for the $L^2-$norm of the solution of the approximation equation (\ref{e}), $X_\lambda$, does not contain a stochastic part (due to the skew-adjointness of the operators $B_i,\ i=1,...,N,$ see (\ref{lele1}) below) and the uniform pathwise convergence of $X_\lambda$ in (\ref{e8e}) below. Besides this, here we obtain  the extinction in finite time of the solutions with probability one  (stronger than in \cite{b3}, where the authors prove this only with positive probability), see Theorem \ref{la17} and \ref{Tit3} below.  Finally, we also prove a result concerning the positivity of the solutions, see  Theorem \ref{Tit1} below.

\section{Preliminaries}
For every $1\leq p\leq\infty$, by $L^p(\mathcal{O})$, we denote the space of all Lebesgue $p-$integrable functions on $\mathcal{O}$ with the norm $|\cdot|_p$. The scalar product in $L^2(\mathcal{O})$ is denoted by $\left<\cdot,\cdot\right>$.  $W^{1,p}(\mathcal{O})$ denotes the standard Sobolev space $\left\{u\in L^p(\mathcal{O});\ \nabla u\in L^p(\mathcal{O})\right\}$ with the corresponding norm
$$\|u\|_{1,p}:=\left(\int_\mathcal{O}|\nabla u|^pd\xi\right)^\frac{1}{p}+|u|_p,$$where $d\xi$ denotes the Lebesgue measure on $\mathcal{O}$. $W_0^{1,p}(\mathcal{O})$ denotes the space $\left\{u\in W^{1,p}(\mathcal{O});\ u= 0\text{ on }\partial\mathcal{O}\right\}.$ We set $H_0^1(\mathcal{O})=W^{1,2}_0(\mathcal{O})$, $\|\cdot\|_1=\|\cdot\|_{1,2}$ and $H^2(\mathcal{O})=\left\{u\in H^1(\mathcal{O});\ D_{ij}^2u\in L^2(\mathcal{O}),\ 1\leq i,j\leq d\right\},$ with its usual norm $\|\cdot\|_{H^2(\mathcal{O})}$. $H^{-1}(\mathcal{O})$ with the norm $\|\cdot\|_{-1}$ denotes the dual of $H_0^1(\mathcal{O})$. By $BV(\mathcal{O})$ we denote the space of functions $u$ of bounded variation on $\mathcal{O}$.

We set $A=-\Delta, \mathcal{D}(A)=H_0^1(\mathcal{O})\cap H^2(\mathcal{O}),$ that is, the Laplace operator associated to  Dirichlet boundary conditions. Then,  we consider an eigenbasis of  $L^2(\mathcal{O})$, denoted by $\left\{e_k\right\}_{k\in\mathbb{N}^*},\ e_k\in H^2(\mathcal{O})\cap H_0^1(\mathcal{O})$. Finally, for each $\epsilon>0$ we set   
\begin{equation}\label{Ae}J_\epsilon=(1+\epsilon A)^{-1},\ A_\epsilon=AJ_\epsilon=\frac{1}{\epsilon}(I-J_\epsilon),\end{equation}namely, the resolvent and the Yosida approximation of the Laplace operator, respectively.

 Next, we introduce $\mathcal{B}$, the set of all functions $b$ of the form $b=(b^1,...,b^d),\ b^i:\mathbb{R}^d \rightarrow \mathbb{R},\ i=1,...,d,$ such that 
\begin{itemize}
\item[\textbf{(H1)}] $b^i\in C^2(\overline{\mathcal{O}}),\ i=1,...,d$;
\item[\textbf{(H2)}] $\text{div}\ b=0$;
\item[\textbf{(H3)}] $b$ is tangent to the boundary $\partial\mathcal{O}$, of the domain $\mathcal{O}$.
\end{itemize}

Now, let any $b\in \mathcal{B}$. We associate to it the  operators  $B: H^1_0(\mathcal{O})\rightarrow L^2(\mathcal{O})$, defined as
\begin{equation}\label{momo1}Bv:=b\cdot\nabla v,\ \forall v\in H_0^1(\mathcal{O});\end{equation}and $B^2:H_0^1(\mathcal{O})\rightarrow L^2(\mathcal{O})$ 
$$B^2u=-B^*Bu,\ \forall u\in H_0^1(\mathcal{O}),$$where $B^*$ is the adjoint of $B$ in $L^2(\mathcal{O})$. In the following, we shall see that the domain of $B^*$ contains $H_0^1(\mathcal{O})$ and we have $B^*u=-Bu,\ \forall u\in H_0^1(\mathcal{O})$. Consequently, the above notation is meaningful.

  We know from \cite{brez}, p. 439 to p. 443, that, for $b\in \mathcal{B}$,   the linear operator $B$ has the following properties: 
\begin{itemize}
\item[(B1)]There exists a positive constant $c_1(b)$, such that
$$\|B\|_{L(H_0^1(\mathcal{O}),L^2(\mathcal{O}))}\leq c_1(b).$$
\item[(B2)] The adjoint of $B$ in $L^2(\mathcal{O})$, denoted by $B^*$, satisfies $\mathcal{D}(B^*)\supset H_0^1(\mathcal{O})$ and $B^*u=-Bu,\ \forall u\in H_0^1(\mathcal{O})$; so,  for all $u\in H_0^1(\mathcal{O})$ it follows that $\left<Bu,u\right>=0$ and $\left<u,B^2u\right>=-\left<Bu,Bu\right>=-|Bu|_2^2$ (which will be frequently used in the sequel).
\item[(B3)] The operator $B$ is the infinitesimal generator of a contraction $C_0-$group in $L^2(\mathcal{O})$, which we denote by $e^{sB},\ s\in\mathbb{R}.$

We include here a sketch of the proof for this point, since we will refer to it latter. The operator $B$ is m-dissipative, indeed, by the skew-adjointness, $B$ is dissipative; and for all $f\in L^2(\mathcal{O})$ the equation $u- Bu=f$ has the solution
\begin{equation}\label{noi8}u(\xi)=\int_0^\infty e^{-s}f(\zeta(s,\xi))d\xi,\ \forall \xi\in\mathcal{O},\end{equation}where $s\rightarrow \zeta(s,\xi)$ is the differential flow defined by the equation 
\begin{equation}\label{zetaa}\frac{d}{ds}\zeta=b(\zeta),\ s\geq 0;\ \zeta(0)=\xi.\end{equation}(By assumptions \textbf{ (H1)} and \textbf{(H3)}, it follows that $s\rightarrow \zeta(s,\xi)$ is well-defined on $[0,\infty)$, is of class $C^2-$in $\xi$ and preserves $\mathcal{O}$.) Hence, $B$ generates a $C_0-$group, $(e^{sB})_{s\in\mathbb{R}}$, on $L^2(\mathcal{O})$, which is given by
$$(e^{sB}f)(\xi)=f(\zeta(s,\xi)),\ \forall f\in L^2(\mathcal{O}),s\in\mathbb{R}.$$
\item[(B4)] Let any $s\in\mathbb{R}$, then, we have: $$e^{sB}y\geq0 \text{ for } y\geq0,$$
$$(e^{sB}y)( e^{sB}y^+)=(e^{sB}y^+)^2$$ and 
$$\nabla(e^{\beta(t)B}y)\cdot \nabla(e^{\beta(t)B}y^+)=|\nabla(e^{\beta(t)B}y^+|^2,$$ for all $y\in H_0^1(\mathcal{O})$, where $y^+$ stands for the positive part of $y$, and $\beta$ is, this time, some one-dimensional Brownian motion.

This is indeed so. By the definition of the group in (B3), we have for all $\xi\in\mathcal{O}$
$$e^{sB}y=y(\zeta(s,\xi))\geq0 \text{ if } y\geq0,$$
$$(e^{sB}y)(e^{sB}y^+)=y(\zeta(s,\xi))y^+(\zeta(s,\xi))=(y^+(\zeta(s,\xi))^2=[e^{sB}y^+]^2,$$and
$$\begin{aligned}\nabla(e^{\beta(t)B}y)\cdot \nabla(e^{\beta(t)B}y^+)&=\nabla(y(\zeta(\beta(t),\xi))\cdot\nabla y^+(\zeta(\beta(t),\xi))\\&
=|\nabla y^+(\zeta(\beta(t),\xi))|^2=|\nabla(e^{\beta(t)B}y^+|^2.\end{aligned}$$
\item[(B5)] $e^{sB}(H_0^1(\mathcal{O}))\subset H_0^1(\mathcal{O})$, and the restriction of $e^{sB}$ to $H_0^1(\mathcal{O})$ is a $C_0-$group in $H_0^1(\mathcal{O})$; besides this, $e^{sB}(H^2(\mathcal{O}))\subset H^2(\mathcal{O})$ (this is true from the definition of the group and because $\zeta$ is of class $C^2(\mathcal{O})$).
\item[(B6)] There exist constants $M(b)>0,\alpha(b)$, independent of $s$, such that 
$$\|e^{sB}\|_{\mathcal{L}(H_0^1(\mathcal{O}))}\leq M(b)e^{\alpha(b)|s|},\ \forall s\in\mathbb{R}.$$

\item[(B7)] We have $$ \left<e^{sB}u,v\right>=\left<u,e^{-sB}v\right>, \ \forall u,v \in H_0^1(\mathcal{O}),$$
$$|e^{sB}u|_2=|u|_2,\forall u\in L^2(\mathcal{O}),$$ and 
$$ \frac{1}{M(b)e^{\alpha(b)|s|}}|\nabla u|_2\leq |\nabla(e^{sB}u)|_2,\ \forall u\in H_0^1(\mathcal{O}),\ \forall s\in \mathbb{R}.$$

The first one follows by the fact that the adjoint of $e^{sB}$ is $e^{-sB}$, since $B^*=-B$; the second one follows by the fact that the Jacobian of $\zeta$ is equal to one and the definition of the group $e^{sB}$, while the last one can be deduced by equivalently writing
$$|\nabla u|_2=|\nabla[e^{-sB}(e^{sB}u)]|_2\leq M(b)e^{\alpha(b)|s|}|\nabla(e^{sB}u)|_2,$$ and using (B6).
\end{itemize}

Now, let $b_1,...,b_N$ be $N$ functions from $\mathcal{B}$. We assume two more hypotheses on them
$$\mathbf{(H_\Delta)} \ \ \ \ \ \ \ \ \ \ \ \ \ \  \ \ \ \ \ \   \Delta e^{sB_i}u=e^{sB_i}\Delta u,\ \forall u\in H^2(\mathcal{O})\cap H_0^1(\mathcal{O}),\ \forall i=1,2,...,N,\ \forall \epsilon >0.\ \ \ \ \ \ \ \  \ \ \ \ \ \ \ $$Since $e^{sB_i}$ preserves $H^2(\mathcal{O})\cap H_0^1(\mathcal{O})$, $\mathbf{(H_\Delta)}$ implies  $J_\epsilon e^{sB_i}u=e^{sB_i} J_\epsilon u$ for all $ u\in H_0^1(\mathcal{O}),\ \forall i=1,2,...,N,\ \forall \epsilon >0.$ Here, $B_i,\ i=1,...,N,$ are the associated operators of $b_i,\ i=1,...,N$ as in (\ref{momo1}); while $e^{sB_i},\ i=1,...,N$ are the $C_0-$groups generated by them, defined as in (B3). 

And

$$\mathbf{(H_C)}  \ \ \ \ \ \ \ \ \ \ \ \  \ \ \ \ \\ \ \ \\ \ \  \ \ \ \ \ \ \ \ \ \ \ \ \ \   \ e^{sB_i}e^{sB_j}=e^{sB_j}e^{sB_i},\ \forall i,j=1,...,N,\ \ \ \ \ \ \ \ \ \ \ \ \ \ \ \ \ \ \  \ \ \ \ \  \ \  \ \  $$ that is, the groups $e^{sB_i},\ i=1,...,N,$ commute, and so 
$$e^{sB_1}e^{sB_2}...e^{sB_N}=e^{s\sum_{i=1}^NB_i}.$$

Before moving on, let us give some examples of such $b_i$ that obey all the above hypotheses.
\begin{example}\label{ex} Let $\Lambda_1,...,\Lambda_N$  be $N$ skew-symmetric, mutually commuting matrices from $\mathcal{M}_d(\mathbb{R})$, i.e. 
$$\Lambda_i^\mathcal{T}=-\Lambda_i \text{ and }\Lambda_i\Lambda_j=\Lambda_j\Lambda_i,\  \text{for } i,j=1,...,N.$$  Assume that  $\left<\Lambda_i\xi,\nu(\xi)\right>_{\mathbb{R}^d}=0,\ \forall \xi\in \partial \mathcal{O}$, for all $i=1,2,...,N$; where $\nu$ is the unit outward normal of the boundary $\partial\mathcal{O}$.

We claim that $b_i(\xi):=\Lambda_i\xi,\ \xi\in\mathbb{R}^d,\ i=1,...,N,$ satisfy our assumptions. Indeed, \textbf{(H1)} and \textbf{(H3)} are obvious, while \textbf{ (H2)} follows by noticing that being  skew-symmetric, the matrices $\Lambda_i$ have the trace $Tr\Lambda_i=0,\ i=1,...,N.$ Furthermore,  the solution $\zeta_i$ to the equation
$$\frac{d}{ds}\zeta_i(s)=b_i(\zeta_i(s)),\ \zeta(0)=\xi;$$is given by $\zeta_i(s)\xi=e^{s\Lambda_i}\xi,\ i=1,...,N.$ It is easy to see that, for all $s\in\mathbb{R}$, $\zeta_i(s)$ are linear maps from $\overline{\mathcal{O}}$ to $\overline{\mathcal{O}}$, invertible, $(\zeta_i(s))^{-1}=e^{-s\Lambda_i}$, and $(\zeta_i(s))^\mathcal{T}=(e^{s\Lambda_i})^\mathcal{T}=e^{s\Lambda_i^\mathcal{T}}=e^{-s\Lambda_i}=(\zeta_i(s))^{-1}$. Therefore, $\zeta_i(s)$ are orthogonal linear transformations of $\overline{\mathcal{O}},$ and so we have the invariance of the Laplacean, namely for $f\in H^2(\mathcal{O})\cap H_0^1(\mathcal{O})$,
$$\Delta (f(\zeta_i(s)\xi))=\Delta f(\zeta_i(s)\xi),\ \forall \xi\in\mathcal{O},$$ or, equivalently 
$$\Delta e^{sB_i}f= e^{sB_i}\Delta f,$$that is exactly $\mathbf{(H_\Delta)}$. Finally, the mutual commutativity of $\Lambda_i,\ i=1,...,N,$ immediately implies $\mathbf{(H_C)}$. 

Before ending with this example, let us mention that $\Lambda_1 =\left(\begin{array}{cc}0 &1\\ -1& 0\end{array}\right),\ \Lambda_2 =\left(\begin{array}{cc}0 &-1\\ 1& 0\end{array}\right)$ and $\mathcal{O}:=\left\{(\xi_1,\xi_2)\in\mathbb{R}^2:\ \xi_1^2+\xi_2^2< R^2\right\}$ for some $R>0$, satisfy the above conditions; thus, $b_1(\xi)=\left(\begin{array}{c}\xi_2\\ -\xi_1\end{array}\right) $ and $b_2(\xi)=\left(\begin{array}{c}-\xi_2\\ \xi_1\end{array}\right) $ obey \textbf{(H1)}-\textbf{(H3)} together with $\mathbf{(H_\Delta)}$ and $\mathbf{(H_C)}$, on this particular $\mathcal{O}$. 
\end{example}

Next, for latter purpose, let us consider the function $v: \mathbb{R}^N\rightarrow \mathbb{R},$ $v(x):=e^{\sum_{i=1}^N x_i B_i}\varphi$, where $\varphi\in H_0^1(\mathcal{O})\cap L^\infty(\mathcal{O})$. By It\^o's formula applied to $v(\beta(t))$, we get
\begin{equation}\label{e2}\begin{aligned}e^{\sum_{i=1}^N\beta_i(t)B_i}\varphi=&\varphi+\frac{1}{2}\int_0^t e^{\sum_{i=1}^N\beta_i(s)B_i }\sum_{i=1}^NB_i^2\varphi ds
+\int_0^te^{\sum_{i=1}^N\beta_i(s)B_i}\sum_{i=1}^NB_i\varphi d\beta_i(s).\end{aligned}\end{equation}

Finally, we introduce the map $\phi: D(\phi)=BV(\mathcal{O})\cap L^2(\mathcal{O})\rightarrow \mathbb{R},$ as follows
\begin{equation}\label{e25}\phi(u)= \|Du\|+\int_{\partial\mathcal{O}}|\gamma_0(u)|d\mathcal{H}^{d-1},\ \forall u\in BV(\mathcal{O})\cap L^2(\mathcal{O})\end{equation} and put
$\phi(u)=+\infty$  if  $u\in L^2(\mathcal{O})\setminus BV(\mathcal{O})$. Here,
$$\|Du\|=\sup\left\{\int_\mathcal{O} u\ \text{div} \varphi d\xi:\ \varphi\in C_0^\infty(\mathcal{O};\mathbb{R}^d),\ |\varphi|_\infty\leq 1\right\},$$
$\gamma_0(u)$ is the trace of $u$ on the boundary and $d\mathcal{H}^{d-1}$ is the Hausdorff measure. Then, we define its subdifferential
$$\partial\phi(u)=\left\{\eta\in L^2(\mathcal{O}):\ \phi(u)-\phi(v)\leq \left<\eta,u-v\right>,\ \forall v\in D(\phi)\right\}.$$

Arguing likewise in \cite{b3}, we may rewrite equation (\ref{ioi20}) in the following equivalent form
\begin{equation}\label{e26}dX(t)+\partial\phi( X(t))dt \ni \frac{1}{2}\sum_{i=1}^NB_i^2(t) X(t)dt+\sum_{i=1}^NB_i(t) X(t)d\beta_i(t),\ t\geq0; \ X(0)=x.\end{equation}

Based on the above reformulation of the equation, we may give the definition of a stochastic variational solution for (\ref{e26}), equivalently for (\ref{ioi20}).
\begin{definition}\label{d1}Let $x\in L^2(\mathcal{O})$. A stochastic process $X:[0,T]\times \Omega\rightarrow L^2(\mathcal{O})$ is said to be a variational solution to (\ref{ioi20}) if the following conditions hold:
\begin{itemize}
\item[(i)] $X$ is $(\mathcal{F}_t)-$ adapted, has $\mathbb{P}-$a.s. continuous sample paths in $L^2(\mathcal{O})$ and $X(0)=x$;
\item[(ii)] $X\in L^2([0,T];\ L^2(\mathcal{O})),\ \phi(X)\in L^1([0,T];\ L^2(\mathcal{O}))\ \ \ \mathbb{P}-\text{a.s.}$;
\item[(iii)] for each $(\mathcal{F}_t)-$progressively measurable process $G\in L^2([0,T];\ L^2(\mathcal{O}))$ $\mathbb{P}-$a.s.  and each $(\mathcal{F}_t)-$ adapted $L^2(\mathcal{O})$-valued process $Z$ with $\mathbb{P}-$a.s. continuous sample paths such that $Z\in L^2([0,T];\ H_0^1(\mathcal{O}))$ $\mathbb{P}-$a.s. and solving the equation
\begin{equation}\label{e27}Z(t)-Z(0)+\int_0^tG(s)ds=\frac{1}{2}\int_0^t\sum_{i=1}^NB_i^2 Z(s)ds+\int_0^t \sum_{i=1}^NB_iZ(s)d\beta_i(s),\ t\in [0,T],\end{equation}we have
\begin{equation}\label{e28}\begin{aligned}\frac{1}{2}&|X(t)-Z(t)|^2_2+\int_0^t\phi(X(s))ds\leq \frac{1}{2}|x-Z(0)|^2_2\\&
+\int_0^t\phi(Z(s))ds+\int_0^t\left<G(s),X(s)-Z(s)\right>ds \ \mathbb{P}-\text{a.s.},\ t\in[0,T].\end{aligned}
\end{equation}Here, $\phi$ is defined in (\ref{e25}), $\left<\cdot,\cdot\right>$ is the duality pairing with pivot space $L^2(\mathcal{O}).$ (Notice that equation (\ref{e27}) has a unique solution for a given initial solution in $L^2(\mathcal{O})$, see \cite{brez}.)
\end{itemize}
\end{definition}
The relation between (\ref{ioi20}) and (\ref{e28})  becomes clearer if one applies (formally) the It\^o's formula to $\frac{1}{2}|X-Z|^2_2$ and take into account  the skew-adjointness of $B_i,\ i=1,...,N$ (see (\ref{lele1}) for details).

 The main existence result  is stated in the  theorem below.
\begin{theorem}\label{t1} Let $\mathcal{O}$ be a bounded and convex open subset of $\mathbb{R}^d$ with smooth boundary, $b_i\in \mathcal{B},\ i=1,...,N,$ such that hypotheses $\mathbf{(H_\Delta)}$ and $\mathbf{(H_C)}$ hold true; and $T>0$. For each $x\in L^2(\mathcal{O})$ there is a unique variational solution $X$ to equation (\ref{ioi20}), such that, for all $p\geq2$
\begin{equation}\label{e230}\sup_{t\in[0,T]}|X(t)|^p_2\leq |x|^p_2\ \ \  \mathbb{P}-\text{a.s.}. \end{equation} Furthermore, if $x,x^*\in L^2(\mathcal{O})$ and $X,X^*$ are the corresponding variational solutions with initial conditions $x,x^*$, respectively, then
\begin{equation}\label{e232}\sup_{t\in[0,T]}|X(t)-X^*(t)|^2_2\leq |x-x^*|^2_2 \ \ \  \mathbb{P}-\text{a.s.}.\end{equation} 
\end{theorem}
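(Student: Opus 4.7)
The plan is to prove existence by the standard Yosida scheme and to get uniqueness and the contraction estimate directly from the variational inequality. First I would replace the multivalued $\text{sgn}$ by its Yosida approximation $\text{sgn}_\lambda$ to obtain a regularised equation (the paper's equation (\ref{e})) whose drift is single-valued, Lipschitz on $L^2$, and monotone. Standard theory for SPDEs with monotone drift then yields a unique $(\mathcal{F}_t)$-adapted solution $X_\lambda$ with continuous $L^2$-paths. To handle the gradient noise I would then introduce $Y_\lambda(t)=e^{-\sum_{i=1}^N\beta_i(t)B_i}X_\lambda(t)$; using (\ref{e2}), hypothesis $\mathbf{(H_C)}$ and the fact that $e^{sB_i}$ is an $L^2$-isometry (property (B7)), $Y_\lambda$ satisfies a \emph{random} parabolic PDE in which the It\^o integrals disappear and the Stratonovich correction is absorbed. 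Uniqueness and existence of $Y_\lambda$ with $H^2$-regularity can then be treated by purely deterministic methods.

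For the a priori bound (\ref{e230}), I would apply It\^o's formula to $|X_\lambda(t)|_2^p$ and use property (B2), $\langle B_i u,u\rangle=0$, so the martingale part vanishes pathwise; the It\^o correction $\frac{1}{2}\sum_i\langle B_i^2X_\lambda,X_\lambda\rangle$ is $-\frac{1}{2}\sum_i|B_iX_\lambda|_2^2$, which exactly cancels the quadratic variation contribution. This skew-adjointness cancellation (the relation referred to in the paper as (\ref{lele1})) is precisely what makes the resulting estimate pathwise. For $H^2$-regularity I would work at the level of $Y_\lambda$: applying $\Delta$ to the random PDE for $Y_\lambda$ and invoking $\mathbf{(H_\Delta)}$, which says that $\Delta$ commutes with $e^{sB_i}$ on $H^2\cap H_0^1$, allows $\Delta$ to be commuted through the conjugating factors and one can close a uniform $L^\infty_tL^2_\xi$-bound on $\Delta Y_\lambda$. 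Property (B5) then transports this bound back to $X_\lambda$.

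To pass $\lambda\to 0$, I would compare two approximations: subtracting the equations for $X_\lambda$ and $X_\mu$, testing with $X_\lambda-X_\mu$, and using monotonicity of $\text{sgn}_\lambda$ together with the same skew-adjointness cancellation as above, the noise terms again drop out pathwise. This gives a pathwise Cauchy property in $C([0,T];L^2(\mathcal{O}))$ and hence uniform pathwise convergence $X_\lambda\to X$ (this is the convergence labelled (\ref{e8e}) in the introduction). To identify $X$ as a variational solution I would fix an arbitrary test pair $(Z,G)$ satisfying (\ref{e27}), apply It\^o's formula to $\frac{1}{2}|X_\lambda(t)-Z(t)|_2^2$, insert the equation for $X_\lambda$, and rearrange so that the $\text{sgn}_\lambda(\nabla X_\lambda)$ term is replaced using convexity of its primitive $\phi_\lambda$ by $\phi_\lambda(X_\lambda)-\phi_\lambda(Z)$. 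Passing to the limit using the uniform convergence, the bound on $\phi(X_\lambda)$ from the $H^2$-estimates, and lower semicontinuity of $\phi$ on $L^2(\mathcal{O})$ yields (\ref{e28}).

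Finally, (\ref{e232}) (and hence uniqueness) follows from (\ref{e28}) alone by a symmetric test: given two variational solutions $X,X^*$ with data $x,x^*$, I would use $X^*$ (regularised if necessary so that it takes values in $H_0^1$) as the process $Z$ and a suitable $G^*$ built from the equation for $X^*$, then swap roles and add; the $\phi$-terms cancel, the $\langle G,\cdot\rangle$-pairings collapse by the skew-adjointness of $B_i$, and one is left with $\frac{1}{2}|X(t)-X^*(t)|_2^2\le\frac12|x-x^*|_2^2$ almost surely. I expect the main obstacle to be the $H^2$-estimate uniform in $\lambda$: without $\mathbf{(H_\Delta)}$ the natural strategy of commuting the Dirichlet Laplacian through the resolvent and through $e^{sB_i}$ fails, and it is precisely to avoid the implausible identity $J_\epsilon B_i=B_iJ_\epsilon$ that the group commutation hypothesis is introduced.
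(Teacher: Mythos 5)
Your existence argument follows the paper's route almost exactly (Yosida approximation, the rescaling $Y_\lambda=e^{-\sum_i\beta_iB_i}X_\lambda$, the pathwise It\^o identity from skew-adjointness, the $H^2$-estimate via $\mathbf{(H_\Delta)}$, the pathwise Cauchy property and lower semicontinuity of $\phi$), and that part is sound. One correction there: you cannot close a \emph{uniform} $L^\infty_tL^2_\xi$-bound on $\Delta Y_\lambda$; the estimate the paper obtains (see (\ref{e3e}) and (\ref{mos})) is a uniform $H^1_0$-bound plus only $2\lambda\int_0^T|\Delta Y_\lambda|_2^2\,dt\le\|x\|_1^2$, i.e. $|\Delta Y_\lambda|_{L^2_tL^2_\xi}=O(\lambda^{-1/2})$. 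This weaker bound is exactly what is needed, since the Cauchy estimate for $X_\lambda-X_\epsilon$ only requires $\lambda^2\int_0^T|\Delta X_\lambda|_2^2\,dt\to0$; a uniform $H^2$ bound would be false for the total variation flow in the limit.

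The genuine gap is in your uniqueness step. You propose the symmetric doubling argument: use $X^*$ (regularised) as the test process $Z$ in the inequality (\ref{e28}) for $X$, with ``a suitable $G^*$ built from the equation for $X^*$,'' then swap and add. But a variational solution in the sense of Definition \ref{d1} is \emph{not} known to satisfy any equation of the form (\ref{e27}) with an $(\mathcal{F}_t)$-progressively measurable $G^*\in L^2([0,T];L^2(\mathcal{O}))$ and $Z\in L^2([0,T];H_0^1(\mathcal{O}))$: the definition only asserts the inequality (\ref{e28}) against processes $Z$ that do solve such an equation, and there is in general no $L^2(\mathcal{O})$-valued selection of $\partial\phi(X^*)$ (the flux $\mathrm{div}\,\mathrm{sgn}(\nabla X^*)$ lives in a much weaker space, which is the whole reason for the variational-inequality formulation). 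Mollifying $X^*$ does not repair this, since the mollification neither produces an admissible pair $(Z,G)$ solving (\ref{e27}) nor commutes with the gradient noise. The paper circumvents this by comparing an \emph{arbitrary} variational solution $X^*$ with the \emph{constructed} one: it takes as test process $Z=J_\epsilon Y_\lambda$ with $G=G_\lambda^\epsilon=-J_\epsilon\bigl(\mathrm{div}\,\tilde{\psi}_\lambda(\nabla(e^{\sum_i\beta_iB_i}Y_\lambda))\bigr)\in L^2$, which is admissible in (\ref{e251}), then estimates the pairing $\langle e^{\sum_i\beta_iB_i}(Y^*-Y_\lambda^\epsilon),G_\lambda^\epsilon\rangle$ via the convexity inequality $\psi_\lambda(u)\cdot(u-v)\ge j_\lambda(u)-j_\lambda(v)$ and the contraction $\phi(J_\epsilon v)\le\phi(v)$, and passes $\epsilon\to0$, then $\lambda\to0$, then $x\to x^*$ in $L^2(\mathcal{O})$ to obtain (\ref{e232}) and uniqueness simultaneously. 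Your claim that ``the $\langle G,\cdot\rangle$-pairings collapse by skew-adjointness of $B_i$'' has no counterpart here: $G$ is the drift selection and is unrelated to the noise operators. Without replacing your doubling step by a one-sided comparison against the approximants, the uniqueness proof does not go through.
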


\section{The equivalent random partial differential equation}
The trick to prove Theorem \ref{t1}  is to rewrite equivalently equation (\ref{ioi20}) as a random differential equation, namely the following one
\begin{equation}\label{e240}\left\{\begin{array}{l}\begin{aligned}\partial_t Y(t)&=e^{-\sum_{i=1}^N\beta_i(t) B_i}\text{div}(\text{sgn}(\nabla(e^{\sum_{i=1}^N\beta_i(t) B_i}Y(t))))\ \mathbb{P}-\text{a.s. in } (0,T)\times \mathcal{O},\end{aligned}\\
Y=0 \text{ on }(0,T)\times \partial\mathcal{O},\\
Y(0,\xi)=x(\xi),\ \xi\in\mathcal{O},\end{array}\right.\ \end{equation}by the substitution $Y(t)=e^{-\sum_{i=1}^N\beta_i(t)B_i}X(t)$. This idea is due to \cite{brez}, which was also used in \cite{b5}. There  equations of similar form as (\ref{ioi20}) are treated, the main difference is that, in our case the corresponding leading operator is of high singularity, which is not the case in \cite{brez,b5}. Therefore, the equivalence and  all the other existence and uniqueness  results must be reconsidered and  proved in the new framework. More exactly, we shall apply the technique in \cite{b3}.

In order to rigorously show the equivalence between (\ref{ioi20}) and (\ref{e240}), the definition of the solution of the equation (\ref{e240}) must be given in the sense of a variational inequality, this time a deterministic one, however with random terms. More exactly,
\begin{definition}Let $x\in L^2(\mathcal{O})$. A stochastic process $Y:[0,T]\times\Omega\rightarrow L^2(\mathcal{O})$ is said to be a variational solution to (\ref{e240}) if the following conditions hold:
\begin{itemize}
\item[(i)] $Y$ is $(\mathcal{F}_t)-$ adapted, has $\mathbb{P}-$a.s. continuous sample paths in $L^2(\mathcal{O})$ and $Y(0)=x$;
\item[(ii)] $e^{\sum_{i=1}^N\beta_i B_i}Y\in L^2([0,T];\ L^2(\mathcal{O})),\ \phi(e^{\sum_{i=1}^N\beta_i B_i}Y)\in L^1([0,T];\ L^2(\mathcal{O}))$ $\mathbb{P}-$a.s.;
\item[(iii)] for each $(\mathcal{F}_t)-$progressively measurable process $G\in L^2([0,T];\ L^2(\mathcal{O}))$ $\mathbb{P}-$a.s.,  and $Z(0)\in \ L^2(\mathcal{O})$ $\mathbb{P}-$a.s., denote by
\begin{equation}\label{e250}Z(t)=Z(0)-\int_0^te^{-\sum_{i=1}^N\beta_i(s)B_i}G(s)ds.\end{equation}(So, $Z(t)$ is an $(\mathcal{F}_t)-$ adapted $L^2(\mathcal{O})$-valued process,  with $\mathbb{P}-$a.s. continuous sample paths such that $e^{\sum_{i=1}^N\beta_i B_i}Z\in L^2([0,T];\ H_0^1(\mathcal{O}))$ $\mathbb{P}-$a.s..) We have
\begin{equation}\label{e251}\begin{aligned}\frac{1}{2}&|e^{\sum_{i=1}^N\beta_i(t)B_i}(Y(t)-Z(t))|^2_2+\int_0^t\phi(e^{\sum_{i=1}^N\beta_i(s)B_i}Y(s))ds\leq \frac{1}{2}|x-Z(0)|^2_2\\&
+\int_0^t\phi(e^{\sum_{i=1}^N\beta_i(s)B_i}Z(s))ds+\int_0^t\left<G(s),e^{\sum_{i=1}^N\beta_i(s)B_i}(Y(s)-Z(s))\right>ds\ \mathbb{P}-\text{a.s.},\ t\in[0,T].\end{aligned}
\end{equation}Here, $\phi$ is defined in (\ref{e25}), $\left<\cdot,\cdot\right>$ is the duality pairing with pivot space $L^2(\mathcal{O}).$ 
\end{itemize}
\begin{remark}As before, the relation between (\ref{e251}) and (\ref{e240}) is evident once one applies (formally) the It\^o's formula to $\frac{1}{2}|Y(t)-Z(t)|^2_2$, and takes into account that, by (B7), $$|Y(t)-Z(t)|_2^2=|e^{\sum_{i=1}^N\beta_i(t)B_i}(Y(t)-Z(t))|_2^2.$$\end{remark}
\end{definition}
Now we claim the equivalence between the two equations
\begin{proposition}$X:[0,T]\times\Omega\rightarrow L^2(\mathcal{O})$ is a variational solution to equation (\ref{ioi20}) if and only if $Y:=e^{-\sum_{i=1}^N\beta_i B_i}X$ is a variational solution to (\ref{e240}).
\end{proposition}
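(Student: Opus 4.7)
The plan is to establish the equivalence by a direct algebraic/stochastic manipulation, exploiting the fact that the random transformation $e^{\sum_i \beta_i(t)B_i}$ is, pathwise, an $L^2$-isometry (property (B7)), together with the commutativity hypothesis $\mathbf{(H_C)}$ which allows us to treat $\sum_i \beta_i(t) B_i$ essentially as a single scalar-like generator. I would first observe that conditions (i) and (ii) transfer automatically between $X$ and $Y$: continuity of sample paths is preserved because $s \mapsto e^{sB_i}$ is a $C_0$-group and $\beta$ is continuous; and $\phi(e^{\sum_i \beta_i B_i} Y) = \phi(X)$ and $|Y|_2 = |X|_2$ make the integrability conditions match verbatim.

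The heart of the argument is to set up a bijective correspondence between the admissible test processes in the two definitions. Given $Z$ as in (\ref{e250}) (a pathwise absolutely continuous process), I would set $\wtil Z(t) := e^{\sum_i \beta_i(t) B_i} Z(t)$ and verify it satisfies (\ref{e27}) with the same $G$. To do this, apply It\^o's formula to the map $(t,x) \mapsto e^{\sum_i x_i B_i} Z(t)$ evaluated at $x = \beta(t)$: using $\mathbf{(H_C)}$, the partial derivatives with respect to $x_i$ produce $B_i e^{\sum_j x_j B_j} Z(t)$ and $B_i^2 e^{\sum_j x_j B_j} Z(t)$, exactly as in (\ref{e2}), while the time derivative contribution $e^{\sum_i \beta_i(t) B_i} \partial_t Z(t)$ collapses to $-G(t)$ by (\ref{e250}). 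This yields
\[
d\wtil Z(t) = -G(t)\,dt + \tfrac{1}{2}\sum_{i=1}^N B_i^2 \wtil Z(t)\,dt + \sum_{i=1}^N B_i \wtil Z(t)\,d\beta_i(t),
\]
which is (\ref{e27}). The reverse direction is analogous: given $\wtil Z$ satisfying (\ref{e27}), apply It\^o to $Z(t) := e^{-\sum_i \beta_i(t)B_i}\wtil Z(t)$; the stochastic integrals in (\ref{e27}) and those produced by differentiating the exponential exactly cancel (again by $\mathbf{(H_C)}$), and the It\^o correction cancels the drift $\frac{1}{2}\sum_i B_i^2 \wtil Z$, leaving the purely absolutely continuous evolution (\ref{e250}).

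Once the test-process bijection is in hand, the translation of the variational inequalities is immediate. Under the substitution $X = e^{\sum_i \beta_i(t)B_i} Y$, $\wtil Z = e^{\sum_i \beta_i(t)B_i} Z$, the three relevant quantities match by the $L^2$-isometry: $|e^{\sum_i \beta_i(t)B_i}(Y(t)-Z(t))|_2 = |X(t) - \wtil Z(t)|_2$; the integrands $\phi(e^{\sum_i \beta_i(s) B_i}Y(s))$, $\phi(e^{\sum_i \beta_i(s) B_i}Z(s))$ become $\phi(X(s))$ and $\phi(\wtil Z(s))$; and the duality pairing $\langle G(s), e^{\sum_i \beta_i(s)B_i}(Y(s)-Z(s))\rangle$ becomes $\langle G(s), X(s) - \wtil Z(s)\rangle$. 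The initial conditions match because $\beta(0) = 0$ gives $e^{\sum_i \beta_i(0)B_i} = I$, so $Z(0) = \wtil Z(0) = x - Y(0) + \ldots$ (more precisely, $Z(0)$ in the $Y$-definition coincides with $\wtil Z(0)$ in the $X$-definition). Hence (\ref{e251}) holds for every admissible $Z$ iff (\ref{e28}) holds for every admissible $\wtil Z$.

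The main obstacle I expect is making the It\^o calculation for the test-process correspondence fully rigorous. The formal use of It\^o's formula via (\ref{e2}) requires justifying the interchange of $B_i$ with $e^{\sum_j \beta_j B_j}$ on the appropriate domain; this is where $\mathbf{(H_C)}$ is essential, since commutativity of the $C_0$-groups implies commutativity of their generators on a dense core. A secondary technical point is ensuring that the process $Z$ of (\ref{e250}) has enough regularity ($\wtil Z \in L^2(0,T;H_0^1(\mathcal{O}))$ as required in Definition \ref{d1}), which follows from (B5)--(B6) applied pathwise together with the measurability of $G$; but I would state this carefully and invoke the preservation properties of $e^{sB_i}$ recorded in the preliminaries rather than re-derive them.
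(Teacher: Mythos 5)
Your proposal is correct and follows essentially the same route as the paper: the paper reduces this proposition to its Proposition \ref{1p1}(ii), which is precisely your test-process bijection between solutions of (\ref{e27}) and (\ref{e250}), after which the variational inequalities (\ref{e28}) and (\ref{e251}) match term by term via the $L^2$-isometry in (B7). The one technical point you flag (rigorous justification of the It\^o computation for $e^{\pm\sum_i\beta_i(t)B_i}Z(t)$) is handled in the paper by a mollifier argument combined with formula (\ref{e2}), but the substance of the argument is the same.
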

The above proposition follows from Proposition \ref{1p1} (ii) below. In the sequel, it will be important to distinguish between the space $\mathcal{L}^2(\mathcal{O})$ of square integrable functions on $\mathcal{O}$, and $L^2(\mathcal{O})$ the corresponding $d\xi-$classes.

\begin{proposition}\label{1p1}Let $G\in L^2([0,T];\ L^2(\mathcal{O}))$ $\mathbb{P}-$a.s. be $(\mathcal{F}_t)-$progressively measurable and $Z(0)\in L^2(\Omega,\mathcal{F}_0;\ L^2(\mathcal{O})).$ Let $G^0$ be a $(dt \otimes d\xi\otimes \mathbb{P})-$ version of $G$ such that $(t,\omega)\rightarrow G^0(t,\xi,\Omega)$ is $(\mathcal{F}_t)-$progressively measurable and in $L^2([0,t]\times\Omega)$ for every $\xi\in\mathcal{O}$. Furthermore, let $Z^0$ be a $(d\xi\otimes\mathbb{P})-$version of $Z(0)$ such that $\omega\rightarrow Z^0(\xi,\omega)$ is $\mathcal{F}_0-$measurable for all $\xi\in\mathcal{O}$.
\begin{itemize}
\item[(i)] Define
\begin{equation}\label{313}\begin{aligned}Z(t):=& e^{\sum_{i=1}^N\beta_i(t)B_i}Z^0\\&
-e^{\sum_{i=1}^N\beta_i(t)B_i}\int_0^t e^{-\sum_{i=1}^N\beta_i(s)B_i}G^0(s) ds,\  t \in [0,T].\end{aligned}\end{equation}Then, $Z$ is solution to the stochastic differential equation
\begin{equation}\label{323}dZ(t)=-G^0(t)dt+\frac{1}{2}\sum_{i=1}^NB_i^2(t)Z(t)dt+\sum_{i=1}^NB_i(t)Z(t)d\beta_i(t),\ t\in[0,T];\ Z(0)=Z^0,\end{equation}which is $\mathcal{B}([0,t])\otimes\mathcal{B}(\mathcal{O})\otimes\mathcal{F}_t-$measurable for each $t\in[0,T]$. (Here $\mathcal{B}(O)$ is the Borel set associated to the  set $O$).

Furthermore, the map $t\rightarrow Z(t) \in L^2(\mathcal{O})$ is $\mathbb{P}-$a.s. continuous. Hence,   $Z(t)$ is the unique solution to (\ref{e27}).
\item[(ii)] An $(\mathcal{F}_t)-$ adapted $\mathbb{P}$-a.s. continuous $L^2(\mathcal{O})$-valued process $(Z(t))_{t\in[0,T]}$ is a solution to the stochastic equation (\ref{e27}) if and only if $(e^{-\sum_{i=1}^N\beta_i(t)B_i}Z(t)))_{t\in[0,T]}$ is a solution to the deterministic equation (\ref{e250}) for $\mathbb{P}-$a.e. given $\omega\in\Omega.$
\end{itemize}
\end{proposition}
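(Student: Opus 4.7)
The plan is to deduce both parts by direct application of the Itô formula (\ref{e2}), exploiting property (B7) (the $L^2$-isometry of $e^{sB_i}$) and hypothesis $\mathbf{(H_C)}$ (which allows $B_j$ to pass through the compound group $\Phi(t):=e^{\sum_i\beta_i(t)B_i}$).

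For part (i), I would write $Z(t)=W(t)-V(t)$ where $W(t):=\Phi(t)Z^0$ and $V(t):=\Phi(t)U(t)$ with $U(t):=\int_0^t\Phi(s)^{-1}G^0(s)\,ds$. Applying (\ref{e2}) with $\varphi=Z^0$, and using $\mathbf{(H_C)}$ to commute $B_j$ with $\Phi$, yields $dW=\tfrac12\sum_i B_i^2W\,dt+\sum_i B_iW\,d\beta_i$. For $V$, the process $U$ has $\mathbb{P}$-a.s.\ absolutely continuous trajectories, so the Itô product rule produces no bracket term and gives $dV=\tfrac12\sum_i B_i^2V\,dt+\sum_i B_iV\,d\beta_i+\Phi(t)\Phi(t)^{-1}G^0(t)\,dt$; the last term collapses to $G^0(t)\,dt$. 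Subtracting the two differentials produces (\ref{323}). Joint measurability of $Z(t,\xi,\omega)$ follows from the explicit formula (\ref{313}), together with the progressive measurability of $\beta$ and $G^0$ and the strong continuity of the groups on $L^2(\mathcal{O})$ (see (B3)); $\mathbb{P}$-a.s.\ continuity of $t\mapsto Z(t)\in L^2(\mathcal{O})$ is inherited from (B7) and the continuity of $\beta$. Uniqueness for the linear SDE (\ref{323}) is classical: for two solutions $Z_1,Z_2$, Itô's formula applied to $|Z_1-Z_2|_2^2$ together with $\langle B_i u,u\rangle=0$ from (B2) shows that the quadratic variation contribution cancels the $\tfrac12\sum B_i^2$ drift, leaving $|Z_1(t)-Z_2(t)|_2^2=0$.

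Part (ii) then follows almost immediately from part (i). If $Z$ solves (\ref{e27}), then by the uniqueness just established $Z$ must coincide with the expression (\ref{313}) for $Z^0=Z(0)$ and $G^0=G$. Applying $\Phi(t)^{-1}=e^{-\sum_i\beta_i(t)B_i}$ to both sides and using $\Phi(t)^{-1}\Phi(t)=I$ gives $\Phi(t)^{-1}Z(t)=Z(0)-\int_0^t\Phi(s)^{-1}G(s)\,ds$, which is exactly (\ref{e250}) for $Y(t):=\Phi(t)^{-1}Z(t)$. Conversely, if $Y$ satisfies (\ref{e250}), then setting $Z(t):=\Phi(t)Y(t)$ puts $Z$ into the form (\ref{313}) with $Z^0=Y(0)$ and $G^0=G$, so by part (i) this $Z$ solves (\ref{e27}).

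The main obstacle I anticipate is the rigorous application of (\ref{e2}) when its argument is not in $H_0^1(\mathcal{O})\cap L^\infty(\mathcal{O})$ (the class in which (\ref{e2}) was derived) but only in $L^2(\mathcal{O})$, and in the case of $V$ is also time-dependent. To handle this I would pair both sides of each displayed SDE against a test function $\psi$ in the eigenbasis $\{e_k\}$: for each fixed $\psi$ the resulting scalar products become standard real-valued Itô processes, (B2) transfers $B_i$ onto $\psi$ to control regularity, and by density and the strong continuity of $\Phi$ on $L^2(\mathcal{O})$ one recovers the vector-valued identity. An alternative route is to approximate $Z^0$ and $G^0$ by smoother processes, apply (\ref{e2}) directly in the regular case, and pass to the limit using the $L^2$-isometry (B7).
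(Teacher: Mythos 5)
Your proposal is correct and follows essentially the same route as the paper: both rest on the It\^o product rule combining formula (\ref{e2}) for the group $e^{\sum_i\beta_i(t)B_i}$ with the absolutely continuous factor $\int_0^t e^{-\sum_i\beta_i(s)B_i}G^0(s)\,ds$ (so no quadratic covariation arises), with the skew-adjointness (B2) and the commutativity hypotheses used to move the $B_i$'s around, and with the regularity issue you flag resolved exactly as in the paper, namely by pairing against smooth test functions (the paper uses mollified $\varphi_\epsilon\in H_0^1(\mathcal{O})$ rather than the eigenbasis, and additionally checks that the stochastic integrals are independent of the chosen $d\xi$-version). The only cosmetic difference is direction: you differentiate the explicit formula (\ref{313}) forward, whereas the paper posits $dZ=\mu\,dt+\sum_i\sigma_i\,d\beta_i$, expands $\frac{d}{dt}(e^{-\sum_i\beta_i(t)B_i}Z(t))$, and identifies the coefficients.
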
 
\begin{proof}Item (ii) is a direct consequence of (i); that is why we only prove (i).

 Via (\ref{313}) we get that
\begin{equation}\label{raw20}\frac{d}{dt}(e^{-\sum_{i=1}^N\beta_i(t)B_i}Z(t))=-e^{-\sum_{i=1}^N\beta_i(t)B_i}G^0(t),\ t\in [0,T].\end{equation}Next, consider a symmetric  mollifier $\rho_\epsilon,\ \epsilon>0,$ (that is, $\rho_\epsilon(\xi-\eta)=\rho_\epsilon(\eta-\xi)$) and, given a function $u$,  denote by $u_\epsilon$ its convolution with it. Notice that we have 
$$\left<u_\epsilon,v\right>=\left<u,v_\epsilon\right>,\ \forall u,v\in L^2(\mathcal{O}).$$ Further, let any $\varphi\in H_0^1(\mathcal{O})$, we then have
$$\left<(e^{-\sum_{i=1}^N\beta_i(t)B_i}Z(t))_\epsilon,\varphi\right>=\left<Z(t),e^{\sum_{i=1}^N\beta_i(t)B_i}\varphi_\epsilon\right>,$$and so
$$\begin{aligned}\left<\left(\frac{d}{dt}(e^{-\sum_{i=1}^N\beta_i(t)B_i}Z(t))\right)_\epsilon,\varphi\right>=&\left<d Z(t),e^{\sum_{i=1}^N\beta_i(t)B_i}\varphi_\epsilon\right>+\left<Z(t),d(e^{\sum_{i=1}^N\beta_i(t)B_i}\varphi_\epsilon)\right>\\&+\int_\mathcal{O}dZ(t)\cdot d(e^{\sum_{i=1}^N\beta_i(t)B_i}\varphi_\epsilon)d\xi,\end{aligned}$$where the above product $\cdot$ is the formal It\^o's product between two stochastic differentials. Taking into account that $Z$ is a semi-martingale, we may denote by $dZ(t)=:\mu(t)dt+\sum_{i=1}^N\sigma_i(t)d\beta_i(t)$, then, recalling relation (\ref{e2}), the above equality implies that
$$\begin{aligned}&\left<\left(\frac{d}{dt}(e^{-\sum_{i=1}^N\beta_i(t)B_i}Z(t))\right)_\epsilon,\varphi\right>=\left<(e^{-\sum_{i=1}^N\beta_i(t)B_i}d Z(t))_\epsilon,\varphi\right>\\&
+\left<Z(t),\frac{1}{2}e^{\sum_{i=1}^N\beta_i(t)B_i}\sum_{i=1}^NB_i^2\varphi_\epsilon\right>+\left<Z(t),e^{\sum_{i=1}^N\beta_i(t)B_i}\sum_{i=1}^NB_i\varphi_\epsilon d\beta_i(t)\right>\\&
+\sum_{i=1}^N\left<\sigma_i(t),e^{\sum_{i=1}^N\beta_i(t)B_i}B_i\varphi_\epsilon\right>\end{aligned},$$that yields
$$\begin{aligned}&\left<\left(\frac{d}{dt}(e^{-\sum_{i=1}^N\beta_i(t)B_i}Z(t)\right)_\epsilon,\varphi\right>=\left<(e^{-\sum_{i=1}^N\beta_i(t)B_i}d Z(t))_\epsilon,\varphi\right>\\&
\left<\frac{1}{2}\left[\sum_{i=1}^NB_i^2e^{-\sum_{i=1}^N\beta_i(t)B_i}Z(t)\right]_\epsilon,\varphi\right>\\&
-\sum_{j=1}^N\left<\left(B_je^{-\sum_{i=1}^N\beta_i(t)B_i}Z(t)\right)_\epsilon d\beta_j(t),\varphi\right>\\&
-\sum_{j=1}^N\left<\left(B_je^{-\sum_{i=1}^N\beta_i(t)B_i}\sigma_j(t)\right)_\epsilon,\varphi\right>,\ \forall \varphi\in H_0^1(\mathcal{O}).\end{aligned}$$Here we have frequently  used the fact that $B_i,\ i=1,...,N,$  are skew-adjoint.

Since the mollified functions are continuous in $\xi$,  taking $\epsilon_n=\frac{1}{n}$ and letting $n \rightarrow \infty$, we arrive to
$$\begin{aligned}&\frac{d}{dt}(e^{-\sum_{i=1}^N\beta_i(t)B_i}Z(t))=e^{-\sum_{i=1}^N\beta_i(t)B_i}d Z(t)+\frac{1}{2}\sum_{i=1}^NB_i^2e^{-\sum_{i=1}^N\beta_i(t)B_i}Z(t)\\&
-\sum_{j=1}^NB_je^{-\sum_{i=1}^N\beta_i(t)B_i}Z(t) d\beta_j(t)-\sum_{j=1}^NB_je^{-\sum_{i=1}^N\beta_i(t)B_i}\sigma_j(t),\ t\in [0,T], \mathbb{P}\otimes d\xi -\text{a.s.},\end{aligned}$$where using relation (\ref{raw20}), we arrive to the fact that $Z$ satisfies the following stochastic differential equation 
$$dZ(t)=-G^0(t)dt+\frac{1}{2}\sum_{i=1}^NB_i^2(t)Z(t)dt+\sum_{i=1}^NB_i(t)Z(t)d\beta_i(t),\ t\in[0,T],\ \mathbb{P}\otimes d\xi-\text{a.s.},$$which means that $Z$ indeed satisfies (\ref{323}).

Finally, let any $b\in\mathcal{B}$,  $B$ the associated operator as in (\ref{momo1}), and $\beta$ an one-dimensional Brownian motion. Moreover,  let $Z^1(t),\ Z^2(t)\in \mathcal{L}^2(\mathcal{O})$ such that $Z^1=Z^2$, in the sense that they belong to the same $d\xi$-class. Let any $i\in \mathbb{N}^*$ and $e_i$ the $i-$th vector from the eigenbasis of the Laplacean considered in the Preliminaries. Then, $\mathbb{P}-$a.s., for every $t\in[0,T]$, we have
$$\begin{aligned}\left<e_i,\int_0^t BZ^1(s)d\beta(s)\right>&=\int_0^t\left<e_i, BZ^1(s)\right>d\beta(s)\\&
=-\int_0^t\left<Be_i,Z^1(s)\right>d\beta(s)=-\int_0^t\left<Be_i,Z^2(s)\right>d\beta(s)\\&
=\int_0^t\left<e_i,BZ^2(s)\right>d\beta(s)=\left<e_i,\int_0^t BZ^2(s)d\beta(s)\right>\end{aligned},$$relying on the stochastic Fubini theorem and the skew-adjointness of $B$. The above means that, $\mathbb{P}-$a.s., $ \int_0^tBZ^1(s)d\beta(s) =\int_0^tBZ^2(s)d\beta(s).$ The same can be said for the integral $\int_0^tB^2Z^1(s)ds$ related to  $\int_0^t B^2Z^2(s)ds$. In conclusion, the above $Z$ is the unique solution to (\ref{323}).
\end{proof}

\subsection{Proof of the main existence and uniqueness result}

\textit{Proof of Theorem \ref{t1}}
 
\textit{\textbf{Existence}}.
As in \cite{b3} the approach is based on the  construction of approximating schemes for both equations (\ref{ioi20}) and (\ref{e240}). To this end, let $\lambda\in(0,1]$ be fixed, and introduce the Yosida approximation, $\psi_\lambda(u)$, of the function $\psi(u)=\text{sgn}(u),\ u\in\mathbb{R}^d$, that is
\begin{equation}\label{e260}\psi_\lambda(u)=\left\{\begin{array}{cc}\frac{1}{\lambda}u &, \text{ if }|u|\leq\lambda,\\
\frac{u}{|u|}&,\text{ if }|u|>\lambda.\end{array}\right.\ \end{equation}  For latter purpose, we also introduce the Moreau-Yosida approximation of the function $u\rightarrow |u|$, that is $j_\lambda(u)=\inf_v\left\{\frac{|u-v|^2}{2\lambda}+|v|\right\}$ and recall that we have $\nabla j_\lambda=\psi_\lambda,\ \forall \lambda>0$ (see, for instance, \cite{b2}). Finally, denote by $\tilde{\psi}_\lambda(u)=\psi_\lambda(u)+\lambda u,\ \forall u\in\mathbb{R}^d$.

Now, we approximate (\ref{ioi20}) by
\begin{equation}\label{e}\left\{\begin{array}{l}d X_\lambda(t)=\text{div}\tilde{\psi}_\lambda(\nabla X_\lambda(t))dt+\frac{1}{2}\sum_{i=1}^NB_i^2 X_\lambda(t))dt+\sum_{i=1}^NB_i X_\lambda(t)d\beta_i(t) \text{ in }(0,T)\times\mathcal{O},\\
X_\lambda=0 \text{ on }(0,T)\times \partial\mathcal{O},\\
X_\lambda(0)=x \text{ in }\mathcal{O},\end{array}\right.\ \end{equation}and the corresponding rescaled equation (\ref{e240}) by
\begin{equation}\label{ee9}\left\{\begin{array}{l}\frac{d}{dt} Y_\lambda(t)=e^{-\sum_{i=1}^N\beta_i(t)B_i}\text{div}\tilde{\psi}_\lambda(\nabla(e^{\sum_{i=1}^N\beta_i(t)B_i}  Y_\lambda(t))) \text{ in }(0,T)\times\mathcal{O},\\
Y_\lambda=0 \text{ on }(0,T)\times \partial\mathcal{O},\\
Y_\lambda(0)=x \text{ in }\mathcal{O}.\end{array}\right.\  \ \ \ \ \\ \ \\ \ \ \ \ \ \  \ \ \ \ \ \ \ \ \ \  \ \ \ \ \ \ \end{equation}

The proposition below is concerned on the existence of solutions for (\ref{e}) and (\ref{ee9}), respectively, as-well on the equivalence between them.
\begin{proposition}\label{la3}\begin{itemize}
\item[(i)] For each $\lambda\in(0,1]$ and each $x\in L^2(\mathcal{O})$, there is a unique function $X_\lambda$,  which satisfies: $X_\lambda(0)=x,$ is $\mathbb{P}-$a.s. continuous in $L^2(\mathcal{O})$ and $(\mathcal{F}_t)-$adapted such that

\begin{equation}\label{e261}\begin{aligned}&X_\lambda\in L^2([0,T];\ H_0^1(\mathcal{O}))\ \ \mathbb{P}-\text{a.s.},\\&
X_\lambda(t)= x+\int_0^t \text{div}\   \tilde{\psi}_\lambda(\nabla X_\lambda(s))ds+ \frac{1}{2}\int_0^t\sum_{i=1}^NB_i^2 X_\lambda(s)ds\\&
+\int_0^t\sum_{i=1}^N B_iX_\lambda(s)d\beta_i(s),\ t\in[0,T],\ \mathbb{P}-\text{a.s.}.\end{aligned}\end{equation}Furthermore, we have

\begin{equation}\label{lele1}\frac{1}{2}|X_\lambda(t)|_2^2=\frac{1}{2}|x|_2^2-\int_0^t\left<\tilde{\psi}_\lambda(\nabla  X_\lambda(s)),\nabla X_\lambda(s)\right>ds,\ \forall t\in [0,T]\ \ \ \mathbb{P}-\text{a.s.}.\end{equation}

In particular, we have
 
\begin{equation}\label{e262}|X_\lambda(t)|_2\leq |x|_2 \ \ \ \mathbb{P}-\text{a.s.},\ \forall t\in[0,T];\end{equation} and, if $x,x^*\in L^2(\mathcal{O})$ and $X_\lambda,\ X_\lambda^*$ are the corresponding solutions with initial conditions $x,x^*$, respectively, then
\begin{equation}\label{e263}|X_\lambda(t)-X_\lambda^*(t)|_2\leq |x-x^*|_2\ \ \ \mathbb{P}-\text{a.s.},\ \forall t\in[0,T].\end{equation}

\item[(ii)] If $x\in H_0^1(\mathcal{O})$, then $\mathbb{P}-$a.s. equation (\ref{ee9}) has a unique solution such that
\begin{equation}\label{e265}Y_\lambda\in C([0,T];\ H_0^1(\mathcal{O})) \cap   L^\infty([0,T];\ H_0^1(\mathcal{O})) \cap L^2([0,T];\ H^2(\mathcal{O})).\end{equation}

\item[(iii)] $X_\lambda=e^{\sum_{i=1}^N\beta_i(t)B_i}Y_\lambda$ is an $(\mathcal{F}_t)-$adapted process with $\mathbb{P}-$a.s. continuous paths which is the unique solution of (\ref{e}), and we have  $\mathbb{P}-$a.s. $X_\lambda\in  C([0,T];\ H_0^1(\mathcal{O}))\cap  L^2([0,T];\ H^2(\mathcal{O})) \cap  L^\infty([0,T];\ H_0^1(\mathcal{O}))$. More exactly, we have
\begin{equation}\label{mos}\left[\prod_{i=1}^N\frac{1}{M(b_i)e^{\alpha(b_i)|\beta_i(t)|}}\|X_\lambda(t)\|_1\right]+2\lambda\int_0^t|\Delta X_\lambda(s)|_2^2\leq \|x\|_1^2, \  \forall t\in[0,T]\ \  \mathbb{P}-\text{a.s.}.
\end{equation}

\end{itemize}
\end{proposition}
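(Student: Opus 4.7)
The plan is to handle the three parts in order, with (ii) bearing the essential analytic difficulty: (i) is a classical variational SPDE, (iii) reduces to an It\^o-formula identification after (ii) is established, and the scaling $\Phi_t:=e^{\sum_{i=1}^N\beta_i(t)B_i}$ (well-defined by $\mathbf{(H_C)}$) is the bridge between the two approximating equations.

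For (i), note that $A_\lambda u:=-\text{div}\,\tilde{\psi}_\lambda(\nabla u)$ is the subdifferential on $H_0^1(\mathcal{O})$ of the $C^1$, convex, coercive functional $u\mapsto\int_{\mathcal{O}}j_\lambda(\nabla u)\,d\xi+\frac{\lambda}{2}|\nabla u|_2^2$, so it is monotone, hemicontinuous, of linear growth, and satisfies $\langle A_\lambda u,u\rangle\geq\lambda|\nabla u|_2^2$. Combined with the Lipschitz noise maps $u\mapsto B_iu$ (by (B1)) and the drift $\frac12\sum_iB_i^2$, equation (\ref{e}) fits the classical variational SPDE framework of Krylov--Rozovski\u{\i}/Pardoux, yielding existence, uniqueness, and paths in $C([0,T];L^2(\mathcal{O}))\cap L^2(0,T;H_0^1(\mathcal{O}))$. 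Applying It\^o's formula to $|X_\lambda|_2^2$, the martingale term $\sum_i\langle X_\lambda,B_iX_\lambda\rangle\,d\beta_i$ vanishes by the skew-adjointness in (B2), while the It\^o correction $\frac12\sum_i|B_iX_\lambda|_2^2$ cancels exactly against $\sum_i\langle X_\lambda,B_i^2X_\lambda\rangle=-\sum_i|B_iX_\lambda|_2^2$. This yields the pathwise energy identity (\ref{lele1}); the bounds (\ref{e262}) and (\ref{e263}) follow from the monotonicity of $\tilde{\psi}_\lambda$.

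For (ii), fix $\omega$ outside a $\mathbb{P}$-null set. Local well-posedness in $H_0^1$ for (\ref{ee9}) follows from a Galerkin or contraction argument, since $u\mapsto\tilde{\psi}_\lambda(\nabla u)$ is Lipschitz on $H_0^1$ and strongly monotone thanks to the $\lambda u$ summand, and $\Phi_t,\Phi_t^{-1}$ are $H_0^1$-bounded by (B5)--(B6). The decisive step, and the main obstacle, is the $H^2$ estimate. Set $X_\lambda:=\Phi_tY_\lambda$ and test (\ref{ee9}) against $-\Delta Y_\lambda$: using (B7) to move $\Phi_t^{-1}$ to its adjoint $\Phi_t$ and $\mathbf{(H_\Delta)}$ to commute $\Delta$ through $\Phi_t$, one obtains
\begin{equation*}
\tfrac{1}{2}\tfrac{d}{dt}|\nabla Y_\lambda|_2^2 \;=\; -\langle\text{div}\,\tilde{\psi}_\lambda(\nabla X_\lambda),\,\Delta X_\lambda\rangle.
\end{equation*}
Because $\tilde{\psi}_\lambda$ is the gradient of a convex $C^1$ function and $\mathcal{O}$ is convex with $C^3$ boundary, a Brezis-type integration-by-parts identity for $H^2\cap H_0^1$-functions on convex domains (exploiting $D\tilde{\psi}_\lambda\geq\lambda I$ together with the nonnegativity of the associated boundary contribution) dominates the right-hand side above by $-\lambda|\Delta X_\lambda|_2^2$. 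Integrating in time then supplies global existence, the $L^\infty(0,T;H_0^1)\cap L^2(0,T;H^2)$ regularity (\ref{e265}), and the raw form of (\ref{mos}). The delicate point is rigorously justifying the convex-domain sign for the approximated integrand; everything else is relatively mechanical.

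For (iii), set $\widetilde{X}_\lambda(t):=\Phi_tY_\lambda(t)$ with $Y_\lambda$ from (ii). Applying It\^o's formula as in Proposition \ref{1p1}(i), using (\ref{e2}) and that $\partial_tY_\lambda$ is given by (\ref{ee9}), verifies that $\widetilde{X}_\lambda$ satisfies (\ref{e261}); by uniqueness from (i) we conclude $X_\lambda=\widetilde{X}_\lambda$. The regularity of $Y_\lambda$ transfers to $X_\lambda$ via (B5)--(B6) and $\mathbf{(H_\Delta)}$, with the identity $|\Delta X_\lambda|_2=|\Phi_t\Delta Y_\lambda|_2=|\Delta Y_\lambda|_2$ (from (B7) and $\mathbf{(H_\Delta)}$) transferring the $L^2(0,T;H^2)$-bound. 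Finally, iterated application of the last inequality in (B7) to the commuting factors $e^{\beta_i(t)B_i}$ yields $\prod_{i=1}^N(M(b_i)e^{\alpha(b_i)|\beta_i(t)|})^{-1}\|X_\lambda(t)\|_1\leq\|Y_\lambda(t)\|_1$, and inserting this into the $H_0^1$ energy inequality for $Y_\lambda$ from (ii) produces (\ref{mos}).
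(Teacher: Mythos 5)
Your overall architecture matches the paper's: (i) via the variational/monotone SPDE framework plus the It\^o identity with the skew-adjointness cancellation, (ii) as a pathwise deterministic monotone equation whose $H^2$ estimate is the crux, and (iii) by identifying $e^{\sum_i\beta_i(t)B_i}Y_\lambda$ with $X_\lambda$ through It\^o's formula and uniqueness, then transferring regularity via (B7) and $\mathbf{(H_\Delta)}$. The one place where your argument has a real gap is exactly the step you flag as ``delicate'' in (ii): you test (\ref{ee9}) against $-\Delta Y_\lambda$, but this is circular --- $\Delta Y_\lambda$ is not known to be in $L^2(\mathcal{O})$ until the estimate you are trying to prove has been established, and the ``Brezis-type integration-by-parts identity on convex domains'' is invoked without specifying the regularization that makes it legitimate. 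The paper resolves this by multiplying instead by $A_\epsilon Y_\lambda=\frac{1}{\epsilon}(I-J_\epsilon)Y_\lambda$ (see (\ref{Ae})), which is a bounded operator on $L^2(\mathcal{O})$ and hence an admissible test function; the convexity of $\mathcal{O}$ then enters through the inequality $\langle \mathrm{div}\,\psi_\lambda(\nabla v),A_\epsilon v\rangle\geq 0$ (relation (5.19) of \cite{b3}), combined with $\langle Au,A_\epsilon u\rangle\geq|A_\epsilon u|_2^2$, and the hypothesis $\mathbf{(H_\Delta)}$ is used precisely to commute $J_\epsilon$ (equivalently $A_\epsilon$) past the group $e^{\sum_i\beta_i(t)B_i}$ so that the resulting error term vanishes; one then lets $\epsilon\to 0$ to obtain (\ref{e3e}). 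Your formal computation identifies the correct structural ingredients but does not supply this approximation device, which is the actual content of Lemma \ref{lu1}.

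Two smaller omissions: you do not prove the $C([0,T];H_0^1(\mathcal{O}))$ part of (\ref{e265}) --- the paper gets it by rewriting (\ref{ee9}) as $\partial_tY_\lambda=\lambda\Delta Y_\lambda+f$ with $f\in L^2(0,T;L^2(\mathcal{O}))$ (which requires the explicit formula for $\mathrm{div}\,\psi_\lambda(\nabla y)$ on $H^2\cap H_0^1$ together with the just-proved $L^2(0,T;H^2)$ bound) and appealing to linear heat-equation theory. And in (iii) the passage from the pathwise ODE for $Y_\lambda$ to the It\^o equation for $X_\lambda$ is carried out in the paper by expanding in the eigenbasis $(e_j)$ of the Dirichlet Laplacian and using the stochastic Fubini theorem together with (\ref{e2}); your appeal to ``It\^o's formula as in Proposition \ref{1p1}(i)'' is the same idea, and is acceptable since that proposition is already available, but the interchange of the infinite sum with the stochastic integrals still needs the $L^2([0,T]\times\Omega;L^2(\mathcal{O}))$ bound on the drift coming from Lemma \ref{lu1}, which you should cite.
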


\begin{proof}
\begin{itemize}
\item[(i)]Let us consider the operator $\mathcal{A}_\lambda:H_0^1(\mathcal{O})\rightarrow H^{-1}(\mathcal{O})$ defined by
\begin{equation}\label{e267}\left<\mathcal{A}_\lambda y,\varphi\right>=\int_\mathcal{O}\tilde{\psi}_\lambda(\nabla y)\cdot \nabla\varphi\ d\xi,\ \forall \phi\in H_0^1(\mathcal{O}).\end{equation}Hence, equation (\ref{e}) can be rewritten as
\begin{equation}\label{e268}d X_\lambda(t)+\mathcal{A}_\lambda X_\lambda(t)dt=\frac{1}{2}\sum_{i=1}^NB_i^2X_\lambda(t)dt+\sum_{i=1}^NB_iX_\lambda(t)d\beta_i(t),\ t\in[0,T];\ X_\lambda(0)=x,\end{equation}It is shown, for example, in \cite{b2} that $\mathcal{A}_\lambda$ is  demi continuous and it satisfies
$$\|\mathcal{A}_\lambda y\|_{-1}\leq \lambda\|y\|_1+\left(\int_\mathcal{O}d\xi\right)^\frac{1}{2},\ \forall y\in H_0^1(\mathcal{O}),$$and
$$\left<\mathcal{A}_\lambda y_1-\mathcal{A}_\lambda y_2,y_1-y_2\right>\geq \lambda\|y_1-y_2\|_1^2, \forall y_1,y_2\in H_0^1(\mathcal{O}).$$ Then, using similar arguments as in \cite{br2},  one may  deduce that the  equation (\ref{e268}) (equivalently, (\ref{e})) has a unique  solution, $X_\lambda$, satisfying the It\^o integral equation in (\ref{e261}).

Now applying It\^o's formula in (\ref{e261}) to the $L^2-$norm $\frac{1}{2}|X_\lambda(t)|^2_2$, we get
$$\begin{aligned}\frac{1}{2}|X_\lambda(t)|_2^2=&\frac{1}{2}|x|_2^2-\int_0^t\left<\mathcal{A}_\lambda X_\lambda(s),X_\lambda(s)\right>ds+\int_0^t\sum_{i=1}^N\left<X_\lambda(s),B_iX_\lambda(s)\right>d\beta_i(s)\\&
+\frac{1}{2}\int_0^t\sum_{i=1}^N\left(\left<B_i^2X_\lambda(s),X_\lambda(s)\right>+|B_iX_\lambda(s)|_2^2\right)ds\\&
\text{(where using  the skew-adjointness of $B_i$, see (B2))}\\&
=\frac{1}{2}|x|_2^2-\int_0^t\left<\mathcal{A}_\lambda X_\lambda(s),X_\lambda(s)\right>ds\\&
\text{(where using the monotonicity of $\mathcal{A}_\lambda$)}\\&
\leq\frac{1}{2}|x|_2^2,\end{aligned}$$from where relations (\ref{lele1}) and  (\ref{e262}) follow immediately. Similarly, one may show (\ref{e263}) as-well.

\item[(ii)] 

Let us denote by $\Gamma=\Gamma(t,\omega): H_0^1(\mathcal{O})\rightarrow H^{-1}(\mathcal{O})$, the operator defined as
$$\left<\Gamma(t,\omega) y,\varphi\right>=\left<\tilde{\psi}_\lambda(\nabla e^{\sum_{i=1}^N\beta_i(t)B_i}y),\nabla(e^{\sum_{i=1}^N\beta_i(t)B_i}\varphi)\right>,\ \forall y,\varphi\in H_0^1(\mathcal{O}).$$Then, equation (\ref{ee9}) can be rewritten as 
$$\frac{d}{dt}Y_\lambda(t)+\Gamma(t)Y_\lambda(t)=0.$$It is easy to check that for all $t\in[0,T],\ \omega\in\Omega$,  $\Gamma(t,\omega)$ is demi-continuous, and
$$\left<\Gamma(t,\omega)y_1-\Gamma(t,\omega)y_2,y_1-y_2\right>\geq \lambda\|y_1-y_2\|_1^2,\ \forall y_1,y_2\in H_0^1(\mathcal{O}).$$ So, immediately one may deduce the existence and uniqueness of a solution for (\ref{ee9}).

The rest of this item follows by the next two lemmas.
\begin{lemma}\label{lu1}Let $x\in H_0^1(\mathcal{O}).$ The solution $Y_\lambda$ to (\ref{ee9}) belongs to $ L^\infty([0,T];H_0^1(\mathcal{O}))\cap L^2([0,T];H^2(\mathcal{O}))$ and
\begin{equation}\label{e3e}\mathrm{ess}\sup_{t\in[0,T]}\|Y_\lambda(t)\|^2_1+2\lambda\int_0^T|\Delta Y_\lambda(t)|^2_2dt\leq \|x\|_1^2,\ \lambda\in(0,1],\ \ \mathbb{P}-\text{a.s.}.\end{equation}
\end{lemma}
\begin{proof}

Recall the operator $A_\epsilon$ introduced in (\ref{Ae}) and denote by $A_\epsilon^\frac{1}{2}$ its square root operator. By hypothesis $\mathbf{(H_\Delta)}$, we have
\begin{equation}\label{sosull2}\begin{aligned}&\left<e^{-\sum_{i=1}^N\beta_i(t)B_i}\text{div}\ \psi_\lambda(\nabla (e^{\sum_{i=1}^N\beta_i(t)B_i}u,A_\epsilon u\right>=\left<\text{div}\  \psi_\lambda(\nabla(e^{\sum_{i=1}^N\beta_i(t)B_i}u), A_\epsilon e^{\sum_{i=1}^N\beta_i(t)B_i(t)}u\right>\\&
+\frac{1}{\epsilon}\left<\text{div}\  \psi_\lambda(\nabla(e^{\sum_{i=1}^N\beta_i(t)B_i}u),e^{\sum_{i=1}^N\beta_i(t)B_i }J_\epsilon u-J_\epsilon e^{\sum_{i=1}^N\beta_i(t)B_i}u \right>\\&
=\left<\text{div}\  \psi_\lambda(\nabla(e^{\sum_{i=1}^N\beta_i(t)B_i}u), A_\epsilon e^{\sum_{i=1}^N\beta_i(t)B_i(t)}u\right>\geq0 \end{aligned}\end{equation}by similar arguments as relation (5.19) from \cite{b3}. Besides this, we know that
\begin{equation}\label{sosull3}\left<Au,A_\epsilon u\right>\geq |A_\epsilon u|_2^2,\ \forall u\in H_0^1(\mathcal{O}).\end{equation}

Now, multiplying scalarly, in $L^2(\mathcal{O}),$ equation (\ref{ee9}) by $A_\epsilon Y_\lambda$,  use  relations (\ref{sosull2}) and (\ref{sosull3}), we obtain 
$$\frac{1}{2}|A_\epsilon^\frac{1}{2}Y_\lambda(t)|_2^2+\lambda \int_0^t|A_\epsilon Y_\lambda(s)|_2^2ds\leq \frac{1}{2}|A_\epsilon^\frac{1}{2}x|_2^2,\ t\in[0,T]. $$Letting $\epsilon\rightarrow 0$ we arrive to the conclusion of the lemma.
\begin{lemma} Let $x\in H_0^1(\mathcal{O})$. Then, the solution $Y_\lambda$ to (\ref{ee9}) belongs to $C([0,T];H_0^1(\mathcal{O})),$ $\mathbb{P}-$a.s..
\end{lemma}
\begin{proof}
Since $\Delta$ commutes with $e^{sB_i}$ for all $s\in\mathbb{R}$ and $i=1,...,N$, we may rewrite equation (\ref{ee9}) as
\begin{equation}\label{raw22}\frac{d}{dt}Y_\lambda(t)=\lambda\Delta Y_\lambda(t)+f(t) \text{ in } (0,T)\times\mathcal{O},\end{equation}where $f(t):=e^{-\sum_{i=1}^N\beta_i(t)B_i}\text{div}\psi_\lambda(\nabla(e^{\sum_{i=1}^N\beta_i(t)B_i} Y_\lambda(t)))$.
Next, taking into account that, for $y\in H_0^1(\mathcal{O})\cap H^2(\mathcal{O}),$
$$\text{div}\psi_\lambda(\nabla y)=\left\{\begin{array}{cc}\frac{1}{\lambda}\Delta y & \text{on}\ \left\{|\nabla y|\leq\lambda\right\}\\
\frac{\Delta y}{|\nabla y|}-\frac{\nabla y\cdot\nabla|\nabla y|}{|\nabla y|^2}&\text{on}\ \left\{|\nabla y|>\lambda\right\},\end{array}\right.\ $$ and that $e^{sB_i}$ preserves $H_0^1(\mathcal{O})\cap H^2(\mathcal{O})$ for all $s\in\mathbb{R},\ i=1,...,N$,  by Lemma \ref{lu1} we have that $f \in L^2(0,T;\ L^2(\mathcal{O})),\ \mathbb{P}-\text{a.s.}.$ Then, classical theory on the heat equation leads to the wanted conclusion.

\end{proof}
\item[(iii)]

Let $\varphi\in H_0^1(\mathcal{O})\cap L^\infty(\mathcal{O})$. We have
\begin{equation}\label{e1}\begin{aligned}e^{\sum_{i=1}^N\beta_i(t)B_i}Y_\lambda(t)=\sum_{j=1}^\infty \left<Y_\lambda(t),e_j\right>e^{\sum_{i=1}^N\beta_i(t)B_i}e_j
\end{aligned}\end{equation}(Here, $(e_j)_{j\in\mathbb{N}^*}$ is the eigenbases  of the Laplacian considered in the Preliminaries section.) By (\ref{ee9}) and (\ref{e2}) it yields
$$\begin{aligned}&\left<Y_\lambda(t),e_j\right>e^{\sum_{i=1}^N\beta_i(t)B_i}e_j=\left<x,e_j\right>e_j\\&
+\int_0^t\left<e^{-\sum_{i=1}^N\beta_i(s)B_i}\text{div}\tilde{\psi}_\lambda(\nabla(e^{\sum_{i=1}^N\beta_i(s)B_i}Y_\lambda(s))),e_j\right>e^{\sum_{i=1}^N\beta_i(s)B_i }e_jds\\&
+\frac{1}{2}\int_0^t\left<Y_\lambda(s),e_j\right>e^{\sum_{i=1}^N\beta_i(s)B_i }\left(\sum_{i=1}^NB_i^2e_j
\right) ds
+\int_0^t\left<Y_\lambda(s),e_j\right>e^{\sum_{i=1}^N\beta_i(s)B_i}\left(\sum_{i=1}^NB_ie_j\right)d\beta_i(s), \end{aligned}$$for all $j\in\mathbb{N}$, by using the stochastic Fubini Theorem. Next, we sum the above equation from $j=1$ to $\infty$,   to obtain
$$\begin{aligned}e^{\sum_{i=1}^N\beta_i(t)B_i}Y_\lambda(t)=&x+\int_0^t \text{div}\tilde{\psi}_\lambda(\nabla e^{\sum_{i=1}^N\beta_i(s)B_i}Y_\lambda(s)ds+\frac{1}{2}\sum_{j=1}^N\int_0^t B_j^2e^{\sum_{i=1}^N\beta_i(s)B_i}Y_\lambda(s)ds\\&
+\sum_{j=1}^NB_je^{\sum_{i=1}^N\beta_i(s)B_i}Y_\lambda(s)d\beta_j(s),\end{aligned}$$
which  leads to the fact that $X_\lambda=e^{\sum_{i=1}^N\beta_iB_i}Y_\lambda$ solves (\ref{e}).

We notice that we were able to interchange the sums with the integrals because $ e^{-\sum_{i=1}^N\beta_iB_i}\text{div} \tilde{\psi}_\lambda(\nabla e^{\sum_{i=1}^N\beta_iB_i}Y_\lambda)$  belongs to $L^2([0,T]\time\Omega;L^2(\mathcal{O}))$ by Lemma \ref{lu1}.

Now, by (\ref{e3e}), we have
$$ \|e^{-\sum_{i=1}^N\beta_i(t)B_i}X_\lambda(t)\|_1^2+2\lambda\int_0^t|\Delta(e^{-\sum_{i=1}^N\beta_i(s)B_i} X_\lambda(s)|_2^2\leq \|x\|_1,\ \forall t\in [0,T],$$ where
using (B7) and the commutativity between $\Delta$ and the group $e^{sB_i},\ s\in\mathbb{R},\ i=1,...,N$, relation (\ref{mos}) follows immediately.

\end{proof}

\end{itemize}
\end{proof} 
\textbf{Continuation of the proof of Theorem \ref{t1}.} By the density of $H_0^1(\mathcal{O})$ in $L^2(\mathcal{O})$, it is enough to prove the existence for initial conditions $x\in H_0^1(\mathcal{O})$.

We shall show that the sequence $(X_\lambda)_\lambda$ is Cauchy in $ C([0,T];L^2(\mathcal{O}))$ $\mathbb{P}-$a.s., from where it will follow that there is $X$ such that
\begin{equation}\label{e8e}\lim_{\lambda\rightarrow 0}\left[\sup_{t\in[0,T]}|X_\lambda(t)-X(t)|_2^2\right]=0\ \ \mathbb{P}-\text{a.s.}.\end{equation}

By It\^o's formula in (\ref{e}) (see relation (\ref{lele1})), we have
$$\begin{aligned}&|X_\lambda(t)|^2_2= |x|_2^2+2\int_0^t\left<\text{div}\tilde{\psi}_\lambda(\nabla X_\lambda(s)),X_\lambda(s)\right>ds
 \end{aligned}$$ where, using the fact that $\tilde{\psi}_\lambda(u)\cdot u\geq j_\lambda(u)+\lambda |u|^2,\ \forall u\in\mathbb{R}^d$, it yields that
\begin{equation}\label{e5e}|X_\lambda(t)|_2^2+2\int_0^t\int_\mathcal{O}j_\lambda(\nabla X_\lambda(s))d\xi ds+2\lambda\int_0^t|\nabla X_\lambda(s)|_2^2ds\leq |x|_2^2,\ \forall \lambda>0,\ t\in[0,T].\end{equation}

Let $\lambda,\epsilon\in(0,1]$, and $X_\lambda,\ X_\epsilon$ the corresponding  solutions to (\ref{e}). By It\^o's formula (similarly as in (\ref{lele1})),  it follows that,
$$\begin{aligned}\frac{1}{2}&d|X_\lambda(t)-X_\epsilon(t)|_2^2+\left<\psi_\lambda(\nabla X_\lambda(t))-\psi_\epsilon(\nabla X_\epsilon(t)),\nabla X_\lambda(t)-\nabla X_\epsilon(t)\right>dt\\&
+\left<\lambda\nabla X_\lambda(t)-\epsilon \nabla X_\epsilon(t),\nabla(X_\lambda(t)-X_\epsilon(t))\right>dt=0, \ t\in[0,T].\end{aligned}$$
Taking into account that, by the definition of $\psi_\lambda$, we have (for details, see \cite{b3}, p. 817, lines 11 to 16)
$$(\psi_\lambda(u)-\psi_\epsilon(v))\cdot (u-v)\geq -(\lambda+\epsilon),$$and
$$\begin{aligned}&\left<\lambda\nabla X_\lambda(t)-\epsilon \nabla X_\epsilon(t),\nabla(X_\lambda(t)-X_\epsilon(t))\right>\\&
\geq -\left(\lambda^2|\Delta X_\lambda(t)|_2^2+\epsilon^2|\Delta X_\epsilon(t)|_2^2\right)-\frac{1}{2}|X_\lambda(t)-X_\epsilon(t)|_2^2,\end{aligned}$$ we deduce that
$$\begin{aligned}&|X_\lambda(t)-X_\epsilon(t)|_2^2\leq C \int_0^t|X_\lambda(s)-X_\epsilon(s)|_2^2ds+2(\lambda+\epsilon)t\int_\mathcal{O}d\xi\\&
+2\lambda^2\int_0^t|\Delta X_\lambda(s)|_2^2ds+2\epsilon^2\int_0^t|\Delta X_\epsilon(s)|^2_2ds,\ t\in[0,T].\end{aligned}$$Hence, via Gronwall's lemma and (\ref{mos}), for some constant $C>0$, we have
$$\sup_{0\leq s\leq t}|X_\lambda(s)-X_\epsilon(s)|_2^2\leq C(\lambda+\epsilon) \ \ \mathbb{P}-\text{a.s.},$$that is, the sequence $\left\{X_\lambda\right\}_\lambda$ is Cauchy in $ C([0,T];L^2(\mathcal{O}))$ $\ \mathbb{P}-\text{a.s.}$, and so, relation (\ref{e8e}) holds.

Recalling that $\phi$ is lower-semicontinuous in $L^1(\mathcal{O})$ (see (\ref{e25})), we have by (\ref{e8e}) and Fatou's lemma that
\begin{equation}\label{raw2}\liminf_{\lambda\rightarrow 0}\int_0^t\phi(X_\lambda(s))ds\geq\int_0^t\phi(X(s))ds,\ \forall t\in[0,T].\end{equation}

We know that 
\begin{equation}\label{raw3}|j_\lambda(\nabla u)-|\nabla u||\leq \frac{1}{2}\lambda,\end{equation} that yields
\begin{equation}\label{noi21}\left|\int_0^t\int_\mathcal{O}j_\lambda(\nabla X_\lambda(s))d\xi ds-\int_0^t\phi(X_\lambda(s))ds\right|\leq c\lambda.\end{equation} Hence, via (\ref{raw2}), we get
\begin{equation}\label{raw1}\int_0^t\phi(X(s))ds\leq \liminf_{\lambda\rightarrow0}\int_0^t\int_\mathcal{O}j_\lambda(\nabla(X_\lambda(s)))d\xi ds<\infty.\end{equation}

We point out that by (\ref{e263}) and (\ref{e8e}), relation (\ref{e232}) follows immediately; while, Fatou's lemma together with relations (\ref{e8e}) and (\ref{e262}) imply (\ref{e230}). 

It remains to prove (\ref{e28}). To this end, for all processes $Z$ as in Definition \ref{d1} (iii), by It\^o's formula, we get
\begin{equation}\label{noi20}\begin{aligned}\frac{1}{2}&|X_\lambda(t)-Z(t)|^2_2+\int_0^t\int_\mathcal{O}j_\lambda(\nabla X_\lambda(s))d\xi ds\leq \frac{1}{2}|x-Z(0)|^2_2\\&
+\int_0^t\int_\mathcal{O}j_\lambda (\nabla Z(s))d\xi ds+\int_0^t\left<G(s),X_\lambda(s)-Z(s)\right>ds,\ t\in[0,T].\end{aligned}
\end{equation}
We let $\lambda$ tend to zero and use relations (\ref{raw3}), (\ref{raw1}) and (\ref{e8e}) to see that (\ref{e28}) holds true.

\textbf{\textit{Uniqueness}}. Let $x^*\in L^2(\mathcal{O})$ and $x\in H_0^1(\mathcal{O})$. Let $X^*$ be a variational solution to (\ref{ioi20}), with $X^*(0)=x^*$; and $X$ be the solution constructed in the existence part, with $X(0)=x.$ Set $Y^*:=e^{-\sum_{i=1}^N\beta_i B_i}X^*$ and $Y:=e^{-\sum_{i=1}^N\beta_i B_i}X$. Moreover, set $Y_\lambda^\epsilon:=J_\epsilon(Y_\lambda)$, where $Y_\lambda$ is the solution to (\ref{ee9}). By Lemma \ref{lu1}, (B6) and \cite[Remark 8.2]{b3}, it follows that 
$$\begin{aligned}|\nabla e^{\sum_{i=1}^N\beta_i(t) B_i}Y_\lambda^\epsilon(t)|_2^2=&|\nabla J_\epsilon(e^{\sum_{i=1}^N\beta_i(t) B_i}Y_\lambda(t))|_2^2
\leq |\nabla e^{\sum_{i=1}^N\beta_i(t) B_i}Y_\lambda(t)|_2^2\\&
\leq \prod_{i=1}^N M(b_i)e^{\alpha(b_i)|\beta_i(t)|} |\nabla Y_\lambda(t)|_2\leq C\|x\|_1^2 \prod_{i=1}^N M(b_i)e^{\alpha(b_i)|\beta_i(t)|}.\end{aligned}$$So, integrating over $[0,T]$, we see that  $e^{\sum_{i=1}^N\beta_iB_i}Y_\lambda^\epsilon \in L^2([0,T];\ H_0^1(\mathcal{O}))$ $\mathbb{P}-$a.s.. Besides this, it is also a $\mathbb{P}-$a.s. continuous $(\mathcal{F}_t)-$adapted process in $L^2(\mathcal{O})$. We take in (\ref{e250}),  $\tilde{Z}= Y_\lambda^\epsilon$ and $$G=G_\lambda^\epsilon=-J_\epsilon(\text{div}\tilde{\psi}_\lambda(\nabla(e^{\sum_{i=1}^N\beta_i B_i} Y_\lambda))),$$so function $Y_\lambda^\epsilon$ satisfies (\ref{e250}). It yields by (\ref{e251}) that
\begin{equation}\label{e11e}\begin{aligned}&\frac{1}{2}|e^{\sum_{i=1}^N\beta_i(t) B_i}(Y^*(t)-Y_\lambda^\epsilon(t))|^2_2+\int_0^t\phi(e^{\sum_{i=1}^N\beta_i(s) B_i}Y^*(s))ds\\&
\leq \frac{1}{2}|x^*-x|_2^2+\int_0^t\phi(e^{\sum_{i=1}^N\beta_i(s) B_i}Y_\lambda^\epsilon(s))ds\\&
+\int_0^t\left<e^{\sum_{i=1}^N\beta_i(s) B_i}(Y^*(s)-Y_\lambda^\epsilon(s)),G_\lambda^\epsilon\right>ds.\end{aligned}\end{equation}
We estimate now the term $\left<e^{\sum_{i=1}^N\beta_i B_i}(Y^*-Y_\lambda^\epsilon),G_\lambda^\epsilon\right>$, by using the Green's formula, we get
$$\begin{aligned}&\left<e^{\sum_{i=1}^N\beta_i B_i}(Y^*-Y_\lambda^\epsilon),G_\lambda^\epsilon\right>
=\left<\nabla J_\epsilon (e^{\sum_{i=1}^N\beta_i B_i}Y^*)-\nabla (e^{\sum_{i=1}^N\beta_i B_i}Y_\lambda), \psi_\lambda(\nabla (e^{\sum_{i=1}^N\beta_i B_i}Y_\lambda))+\lambda\nabla(e^{\sum_{i=1}^N\beta_i B_i}Y_\lambda)\right>\\&
+\left<\zeta_\lambda^\epsilon,\psi_\lambda(\nabla (e^{\sum_{i=1}^N\beta_i B_i}Y_\lambda))+\lambda\nabla(e^{\sum_{i=1}^N\beta_i B_i}Y_\lambda)\right>,\end{aligned}$$where,
$$\zeta_\lambda^\epsilon=\nabla(e^{\sum_{i=1}^N\beta_i B_i}Y_\lambda)-\nabla J_\epsilon(e^{\sum_{i=1}^N\beta_i B_i}Y_\lambda^\epsilon).$$Since
$$\psi_\lambda(u)\cdot(u-v)\geq j_\lambda(u)-j_\lambda(v),\ \forall u,v \in \mathbb{R}^d,$$we deduce that
$$\begin{aligned}&\left<e^{\sum_{i=1}^N\beta_i B_i}(Y^*-Y_\lambda^\epsilon),G_\lambda^\epsilon\right>\leq \phi_\lambda(J_\epsilon(e^{\sum_{i=1}^N\beta_i B_i}Y^*))-\phi_\lambda(e^{\sum_{i=1}^N\beta_i B_i}Y_\lambda)-\lambda|\nabla(e^{\sum_{i=1}^N\beta_i B_i}Y_\lambda)|^2_2\\&
-\lambda\left<\Delta(e^{\sum_{i=1}^N\beta_i B_i}Y_\lambda), J_\epsilon(e^{\sum_{i=1}^N\beta_i B_i}Y^*)\right>+\left<\psi_\lambda(\nabla(e^{\sum_{i=1}^N\beta_i B_i}Y_\lambda))+\lambda\nabla(e^{\sum_{i=1}^N\beta_i B_i}Y_\lambda),\zeta_\lambda^\epsilon\right>,\end{aligned}$$where
$$\phi_\lambda(z)=\int_\mathcal{O}j_\lambda(\nabla z)d\xi,\ \forall z\in H_0^1(\mathcal{O}).$$Substituting this in (\ref{e11e}) we get that
\begin{equation}\label{e12e}\begin{aligned}&\frac{1}{2}|e^{\sum_{i=1}^N\beta_i(t) B_i}(Y^*(t)-Y_\lambda^\epsilon(t))|^2_2+\int_0^t\phi(e^{\sum_{i=1}^N\beta_i(s) B_i}Y^*(s))ds\\&
+\int_0^t\phi_\lambda(e^{\sum_{i=1}^N\beta_i(s) B_i}Y_\lambda(s))ds+\lambda\int_0^t|\nabla(e^{\sum_{i=1}^N\beta_i(s) B_i}Y_\lambda(s))|_2^2ds\\&
\leq \frac{1}{2}|x^*-x|_2^2+\int_0^t\phi(e^{\sum_{i=1}^N\beta_i(s) B_i}Y_\lambda^\epsilon(s))ds\\&
+\int_0^t\phi_\lambda(J_\epsilon(e^{\sum_{i=1}^N\beta_i(s) B_i}Y^*(s)))ds\\&
-\lambda\int_0^t\left<\Delta(e^{\sum_{i=1}^N\beta_i(s) B_i}Y_\lambda(s)), J_\epsilon(e^{\sum_{i=1}^N\beta_i B_i}Y^*(s))\right>ds\\&
+\int_0^t\left[\left<\psi_\lambda(\nabla(e^{\sum_{i=1}^N\beta_i(s) B_i}Y_\lambda(s)))+\lambda\nabla(e^{\sum_{i=1}^N\beta_i(s) B_i}Y_\lambda(s)),\zeta_\lambda^\epsilon(s)\right> \right]ds
\end{aligned}\end{equation}
Since,
$$|j_\lambda(\nabla u)-|\nabla u||\leq \frac{1}{2}\lambda,\ \forall u\in H_0^1(\mathcal{O}),$$we easily see that we have
\begin{equation}\label{e13e}|\phi(e^{\sum_{i=1}^N\beta_i(s) B_i}Y_\lambda(s))-\phi_\lambda(e^{\sum_{i=1}^N\beta_i(s) B_i}Y_\lambda(s))|\leq C\lambda,\ \forall s\in [0,T],\end{equation}and
\begin{equation}\label{e14e}\int_0^T|\phi_\lambda(J_\epsilon(e^{\sum_{i=1}^N\beta_i(s) B_i}Y^*(s)))-\phi(J_\epsilon(e^{\sum_{i=1}^N\beta_i(s) B_i}Y^*(s)))|ds\leq C\lambda.\end{equation}Using (\ref{e13e}) and (\ref{e14e}) in (\ref{e12e}), it yields
\begin{equation}\label{e15e}\begin{aligned}&\frac{1}{2}|e^{\sum_{i=1}^N\beta_i(t) B_i}(Y^*(t)-Y_\lambda^\epsilon(t))|^2_2+\int_0^t\phi(e^{\sum_{i=1}^N\beta_i(s) B_i}Y^*(s))ds+\lambda \int_0^t|\nabla e^{\sum_{i=1}^N\beta_i(s)B_i} Y_\lambda(s)|_2^2ds\\&
\leq \frac{1}{2}|x^*-x|_2^2+\int_0^t\phi(J_\epsilon(e^{\sum_{i=1}^N\beta_i(s) B_i}Y^*(s)))ds\\&
+\int_0^t\left[\phi(e^{\sum_{i=1}^N\beta_i(s) B_i}Y_\lambda^\epsilon(s))-\phi(e^{\sum_{i=1}^N\beta_i(s) B_i}Y_\lambda(s))\right]ds\\&
-\lambda\int_0^t\left<\Delta(e^{\sum_{i=1}^N\beta_i(s) B_i}Y_\lambda(s)), J_\epsilon(e^{\sum_{i=1}^N\beta_i(s) B_i}Y^*(s))\right>ds\\&
+C_{\lambda,\epsilon}\left(\int_0^t|\zeta_\lambda^\epsilon(s)|_2^2ds\right)^\frac{1}{2},\end{aligned}\end{equation}where
$$C_{\lambda,\epsilon}=\left(\left(\int_0^T|\tilde{\psi}_\lambda(\nabla e^{\sum_{i=1}^N\beta_i(s)B_i}Y_\lambda(s)|_2^2ds\right)\right)^\frac{1}{2}.$$

By \cite[Corrolary 8.1]{b3}, we know that
$$\int_0^t\phi(J_\epsilon(e^{\sum_{i=1}^N\beta_i(s) B_i}Y^*(s)))ds\leq \int_0^t\phi(e^{\sum_{i=1}^N\beta_i(s) B_i}Y^*(s))ds,\ \forall \epsilon>0,$$thus, letting $\epsilon\rightarrow 0$ in (\ref{e15e}) yields
\begin{equation}\label{e16e}\begin{aligned}&|e^{\sum_{i=1}^N\beta_i(t) B_i}(Y^*(t)-Y_\lambda(t))|^2_2\leq |x^*-x|_2^2-\lambda\mathbb{E}\int_0^t\left<\Delta(e^{\sum_{i=1}^N\beta_i(s) B_i}Y_\lambda(s)),e^{\sum_{i=1}^N\beta_i(s) B_i}Y^*(s)\right>ds,
\end{aligned}\end{equation}because : $\lim_{\epsilon\rightarrow 0}\int_0^T|\zeta_\lambda^\epsilon(s)|_2^2ds=0,\ \sup_{\epsilon\in(0,1)}C_{\lambda,\epsilon}<\infty,$ by Lemma \ref{lu1}  $e^{\sum_{i=1}^N\beta_i B_i}Y_\lambda\in L^2([0,T];\ H^2(\mathcal{O})) \ \  \mathbb{P}-\text{a.s.}$, and $e^{\sum_{i=1}^N\beta_i B_i}Y^*\in L^2([0,T];L^2(\mathcal{O}))\ \  \mathbb{P}-\text{a.s.}.$

By Lemma \ref{lu1}, by the commutativity of $\Delta$ with $e^{sB_i}$, for all $s\in\mathbb{R},\ i=1,...,N$, and (B7), we have that
$$\lim_{\lambda\rightarrow 0}\lambda\int_0^t\left<\Delta(e^{\sum_{i=1}^N\beta_i(s) B_i}Y_\lambda(s)),e^{\sum_{i=1}^N\beta_i B_i(s)}Y^*(s)\right>ds=\lim_{\lambda\rightarrow 0}\lambda\int_0^t\left<\Delta Y_\lambda(s)), Y^*(s)\right>ds=0.$$Hence, letting $\lambda\rightarrow 0$ in (\ref{e16e}), it follows  that
$$|X^*(t)-X(t)|_2^2\leq |x^*-x|_2^2,\ t\in[0,T] \ \ \ \mathbb{P}-\text{a.s.},$$ completing so the proof of the theorem, by letting $x\rightarrow x^*$ in $L^2(\mathcal{O}).$
\subsection{Positivity of solution}
We stress that physical models of non-linear diffusion are concerned with non-negative solutions to the equation (\ref{ioi20}). Hence, the next result is of most importance.
\begin{theorem}\label{Tit1} In Theorem \ref{t1} assume in addition that $x\geq0$, almost everywhere in $\mathcal{O}$. Then,
$$X(t,\xi)\geq 0 \ \text{almost everywhere in $(0,T)\times\mathcal{O}\times\Omega$.}$$
\end{theorem}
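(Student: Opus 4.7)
\textit{Proof proposal.} The plan is to establish non-negativity at the level of the approximating solutions $X_\lambda$ from Proposition \ref{la3} and then pass to the limit via (\ref{e8e}). Since $\sup_{t\in[0,T]}|X_\lambda(t)-X(t)|_2\to 0$ $\mathbb{P}$-a.s., a subsequence of $X_\lambda$ converges a.e.\ on $(0,T)\times\mathcal{O}\times\Omega$, so $X_\lambda\ge 0$ a.e.\ will transfer to $X\ge 0$ a.e. Because $X_\lambda=U_t Y_\lambda$ with $U_t:=e^{\sum_{i=1}^N\beta_i(t)B_i}$ (well defined by $\mathbf{(H_C)}$), and because each $e^{sB_i}$ acts as composition with the diffeomorphism $\zeta_i(s,\cdot)$ of $\overline{\mathcal{O}}$, $U_t$ is positivity-preserving and satisfies $(U_t u)^\pm=U_t u^\pm$ (an extension of (B4)). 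Hence it suffices to show $Y_\lambda(t,\cdot)\ge 0$ a.e.\ for every $t$, $\mathbb{P}$-a.s.

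I work pathwise: for a.e.\ $\omega$, by Lemma \ref{lu1}, $Y_\lambda\in C([0,T];H_0^1(\mathcal{O}))\cap L^2([0,T];H^2(\mathcal{O}))$, so $\partial_t Y_\lambda\in L^2([0,T];L^2(\mathcal{O}))$ from the random PDE (\ref{ee9}). The natural test function is $-Y_\lambda^-$, which belongs to $H_0^1(\mathcal{O})$ a.e.\ on $[0,T]$. The chain rule applied to $y\mapsto\tfrac12|y^-|_2^2$ yields
$$\tfrac{1}{2}\tfrac{d}{dt}|Y_\lambda^-(t)|_2^2=-\langle\partial_t Y_\lambda(t),Y_\lambda^-(t)\rangle.$$
Using the adjoint relation $U_t^*=U_t^{-1}$ (from (B7) together with $\mathbf{(H_C)}$) and the identity $U_t Y_\lambda^-=(U_t Y_\lambda)^-$, set $W:=U_t Y_\lambda$ and rewrite
$$\langle\partial_t Y_\lambda,Y_\lambda^-\rangle=\langle\mathrm{div}\,\tilde\psi_\lambda(\nabla W),W^-\rangle=-\langle\tilde\psi_\lambda(\nabla W),\nabla W^-\rangle,$$
the integration by parts being legitimate since $U_t$ leaves $H_0^1(\mathcal{O})$ invariant by (B5), so $W^-\in H_0^1(\mathcal{O})$.

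The algebraic heart of the argument is the pointwise estimate on the last integrand. On $\{W\ge 0\}$ we have $\nabla W^-=0$ a.e.; on $\{W<0\}$ we have $\nabla W=-\nabla W^-$, and by oddness of $\tilde\psi_\lambda$ together with the bound $\tilde\psi_\lambda(u)\cdot u\ge\lambda|u|^2$,
$$\tilde\psi_\lambda(\nabla W)\cdot\nabla W^-=-\tilde\psi_\lambda(\nabla W^-)\cdot\nabla W^-\le -\lambda|\nabla W^-|^2\le 0.$$
Therefore $\langle\partial_t Y_\lambda,Y_\lambda^-\rangle\ge 0$, giving $\tfrac{d}{dt}|Y_\lambda^-|_2^2\le 0$. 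Since $Y_\lambda^-(0)=x^-=0$ when $x\ge 0$, this forces $Y_\lambda^-\equiv 0$, hence $Y_\lambda\ge 0$ and $X_\lambda=U_t Y_\lambda\ge 0$.

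For general non-negative $x\in L^2(\mathcal{O})$ (not necessarily in $H_0^1(\mathcal{O})$), I would approximate by $x_n:=J_{1/n}x\in H_0^1(\mathcal{O})$, which remains non-negative by the maximum principle for the resolvent of the Dirichlet Laplacian, apply the previous step to the corresponding solutions $X^{(n)}$, and then invoke the $L^2$-contraction (\ref{e232}) to transfer $X^{(n)}\ge 0$ to $X\ge 0$ in the limit. The only genuine obstacle I foresee is the careful bookkeeping needed to move $U_t$ across the inner product and to legitimise the chain rule; both points are handled by the regularity of $Y_\lambda$ provided in Lemma \ref{lu1} and by the commutativity properties in (B5), (B7) and $\mathbf{(H_C)}$.
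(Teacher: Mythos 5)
Your argument is correct and follows essentially the same route as the paper: reduce to non-negativity of $X_\lambda$, transfer to $Y_\lambda$ via the positivity-preserving group (B4)/(B5), and run an energy estimate on the negative part (the paper tests the equation for $Z_\lambda=-Y_\lambda$ against $Z_\lambda^+=Y_\lambda^-$, which is literally your computation), concluding from the oddness and monotonicity of $\tilde{\psi}_\lambda$. Your extra steps (a.e.\ subsequence convergence from (\ref{e8e}) and the approximation $x_n=J_{1/n}x$ with the contraction (\ref{e232}) for general $x\in L^2(\mathcal{O})$) merely make explicit what the paper leaves implicit.
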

\begin{proof}
It is evident that it is enough to show that the solution $X_\lambda$ to (\ref{e}) is almost everywhere non-negative on $[0,T]\times\mathcal{O}\times\Omega$. To this end, by (B4) and the relation $X_\lambda=e^{\sum_{i=1}^N\beta_i B_i}Y_\lambda$, it suffices to show that the solution $Y_\lambda$ to (\ref{ee9}) stays non-negative. Let us denote by $Z_\lambda=-Y_\lambda$. Since $-\tilde{\psi}_\lambda(u)=\tilde{\psi}_\lambda(-u)$, it follows that $Z_\lambda$ satisfies
$$\frac{d}{dt}Z_\lambda(t)=e^{-\sum_{i=1}^N\beta_i B_i}\text{div}\tilde{\psi}_\lambda(\nabla(e^{\sum_{i=1}^N\beta_i B_i}Z_\lambda(t)))dt \text{ in }(0,T)\times\mathcal{O};\ Z_\lambda(0)=-x.$$Scalarly multiplying the above equation by $Z_\lambda^+$, yields, using again (B4)
$$\frac{1}{2}\frac{d}{dt}|Z_\lambda^+(t)|_2^2+\int_\mathcal{O}\tilde{\psi}_\lambda(\nabla(e^{\sum_{i=1}^N\beta_i B_i}Z^+_\lambda(t)))\cdot\nabla(e^{\sum_{i=1}^N\beta_i B_i}Z_\lambda^+(t))=0,$$where using the monotonicity of $\tilde{\psi}_\lambda$, we get
$$\frac{d}{dt}|Z_\lambda^+(t)|_2^2\leq 0,\ t\in[0,T],$$hence, $Z_\lambda^+(t)\equiv 0\ -$a.s., since $Z_\lambda^+(0)=(-x)^+=0.$ In consequence, $Y_\lambda^-(t)\equiv 0\ -$a.s, and the conclusion of the theorem follows immediately.
\end{proof}
\subsection{Finite time extinction  and further properties of the positive solution}Next, we are concerned with the problem of extinction in finite time of the solution, which is of fundamental nature for these kind of equations. We notice that, in the case of additive noise of the form $XdW$, this problem has been solved in \cite{gess}. Unfortunately, that result cannot be applied to our case, where the drift term contains space derivatives of the solution. However, we can obtain the following results.
\begin{theorem}\label{la17}Let $1\leq d\leq 2$. Let $X$ be as in Theorem \ref{t1}, with initial condition $x\in L^2(\mathcal{O})$; and let $\tau:=\inf\left\{t\geq 0:\ |X(t)|_2=0\right\}$. Then we have
\begin{equation}\label{wow9}\mathbb{P}[\tau <\infty]=1.\end{equation}
\end{theorem}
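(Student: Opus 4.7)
The plan is to establish a pathwise differential inequality $D^+ m(t)\leq -C_0$ for $m(t):=|X(t)|_2$ whenever $m(t)>0$, where $C_0=C_0(\mathcal{O})>0$ is deterministic. Once this is in hand, $m(t)\leq |x|_2-C_0 t$ while $m$ is positive, giving the deterministic upper bound $\tau\leq |x|_2/C_0$ $\mathbb{P}$-a.s., which is stronger than (\ref{wow9}).

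The key analytic input is the Sobolev-type embedding $BV(\mathcal{O})\cap L^2(\mathcal{O})\hookrightarrow L^2(\mathcal{O})$, which is precisely where the hypothesis $d\leq 2$ enters. For $u\in BV(\mathcal{O})\cap L^2(\mathcal{O})$, zero-extension to $\mathbb{R}^d$ produces $\tilde u\in BV(\mathbb{R}^d)$ whose total variation equals $\phi(u)$, because the boundary integral in (\ref{e25}) accounts for the jump of $\tilde u$ across $\partial\mathcal{O}$. For $d=2$ the isoperimetric inequality gives $|u|_2\leq C\phi(u)$; for $d=1$, $BV$-functions are bounded by their total variation, so $|u|_\infty\leq\phi(u)$ and hence $|u|_2\leq |\mathcal{O}|^{1/2}\phi(u)$. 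Either way,
$$\phi(u)\geq C_0|u|_2,\qquad \forall u\in BV(\mathcal{O})\cap L^2(\mathcal{O}).$$

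I would transfer this bound to the solution via the approximation scheme. Starting from the energy identity (\ref{lele1}) for $X_\lambda$, the convexity inequality $\tilde\psi_\lambda(u)\cdot u\geq j_\lambda(u)\geq |u|-\lambda/2$ together with the zero trace of $X_\lambda\in H_0^1(\mathcal{O})$ (so $\phi(X_\lambda)=\int_\mathcal{O}|\nabla X_\lambda|\,d\xi$) yields
$$\tfrac{1}{2}|X_\lambda(t)|_2^2-\tfrac{1}{2}|X_\lambda(s)|_2^2\leq -C_0\int_s^t|X_\lambda(\tau)|_2\,d\tau+\tfrac{\lambda}{2}|\mathcal{O}|(t-s),\qquad 0\leq s\leq t\leq T.$$
The uniform pathwise convergence (\ref{e8e}), established in the proof of Theorem \ref{t1} for $x\in H_0^1(\mathcal{O})$ and then extended to $x\in L^2(\mathcal{O})$ by density together with the contraction (\ref{e232}), lets me pass $\lambda\to 0$ and deduce, $\mathbb{P}$-a.s.,
$$\tfrac{1}{2}m(t)^2-\tfrac{1}{2}m(s)^2\leq -C_0\int_s^t m(\tau)\,d\tau,\qquad 0\leq s\leq t\leq T.$$

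The function $m$ is therefore non-increasing and, by Definition \ref{d1}(i), $\mathbb{P}$-a.s. continuous. At any $s_0$ with $m(s_0)>0$, continuity gives $m(s_0+h)+m(s_0)\to 2m(s_0)>0$ as $h\to 0^+$, so dividing the inequality on $[s_0,s_0+h]$ by $(m(s_0+h)+m(s_0))h$ and taking $h\to 0^+$ produces $D^+m(s_0)\leq -C_0$. A standard comparison argument then forces $m(t)\leq |x|_2-C_0 t$ as long as $m$ remains positive, giving $\tau\leq |x|_2/C_0$ $\mathbb{P}$-a.s. and hence (\ref{wow9}). The main (mild) obstacle is pinning down the correct Sobolev estimate, which both dictates the dimensional restriction $d\leq 2$ and supplies the rate $C_0$; the passage $\lambda\to 0$ and the extraction of the pointwise bound from the integral form are then routine.
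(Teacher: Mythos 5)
Your proposal is correct, and it rests on the same two ingredients as the paper's proof: the pathwise energy identity (\ref{lele1}) (which has no martingale part, thanks to the skew-adjointness of the $B_i$) and the Sobolev/isoperimetric embedding $|\nabla y|_1\geq\rho\,|y|_{d/(d-1)}$ for $y\in W_0^{1,1}(\mathcal{O})$, which is where $d\leq 2$ enters in both arguments. Where you diverge is in how the resulting integral inequality is converted into extinction. The paper regularizes by passing to $(|X_\lambda(t)|_2^2+\epsilon)^{1/2}$, takes expectations, identifies $\int_0^t\mathbb{P}[|X(s)|_2>0]\,ds$ in the limit, and concludes only the tail estimate $\mathbb{P}[\tau>t]\leq|x|_2/(\rho t)$, from which (\ref{wow9}) follows by letting $t\to\infty$. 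You instead stay pathwise: from $\tfrac12 m(t)^2-\tfrac12 m(s)^2\leq-C_0\int_s^t m(r)\,dr$ (obtained for $x\in H_0^1(\mathcal{O})$ via (\ref{e8e}) and extended to $x\in L^2(\mathcal{O})$ by (\ref{e232})) you run an ODE comparison, $D^+m\leq-C_0$ on $\{m>0\}$, and obtain the deterministic bound $\tau\leq|x|_2/C_0$ almost surely. This is a genuinely stronger conclusion than the stated theorem, and it is consistent with the paper's own computation: had the authors not taken expectations in (\ref{wow5}), their pathwise inequality would have yielded $|X(t)|_2+\rho\min(t,\tau)\leq|x|_2$ and hence the same deterministic bound. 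The only points to make explicit in a full write-up are (a) that $\phi(X_\lambda)=\int_\mathcal{O}|\nabla X_\lambda|\,d\xi$ since $X_\lambda\in H_0^1(\mathcal{O})$ has zero trace, so no boundary term intervenes, and (b) that the comparison lemma for continuous functions with nonpositive upper Dini derivative is applied to $g(t)=m(t)+C_0t$ on $[0,\tau)$; both are routine, and taking $T>|x|_2/C_0$ in Theorem \ref{t1} covers the required time horizon.
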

\begin{proof}Recall that, applying It\^o's formula in (\ref{e261}) to $|X_\lambda(t)|_2^2$, we have
\begin{equation}\label{wow2}d|X_\lambda(t)|_2^2+2\left<\tilde{\psi}_\lambda(\nabla X_\lambda(t)),\nabla X_\lambda(t)\right>=0,\ \forall t\geq0.\end{equation}Hence, for $\epsilon\in(0,1)$, we have, via (\ref{wow2}), that
\begin{equation}\label{wow3}(|X_\lambda(t)|_2^2+\epsilon)^\frac{1}{2}+\int_0^t (|X_\lambda(s)|_2^2+\epsilon)^{-\frac{1}{2}}\left<\tilde{\psi}_\lambda(\nabla X_\lambda(s)),\nabla X_\lambda(s)\right>ds=(|x|^2_2+\epsilon)^\frac{1}{2},\ \forall t\geq0.\end{equation}Recall that $\tilde{\psi}_\lambda (u)\cdot u\geq |u|-\lambda,\ \forall u\in \mathbb{R}^d$, we deduce that
\begin{equation}\label{wow4}\begin{aligned}\left<\tilde{\psi}_\lambda(\nabla X_\lambda(t)),\nabla X_\lambda(t)\right>&
\geq \int_\mathcal{O}|\nabla X_\lambda(t)|d\xi-\lambda \int_\mathcal{O}d\xi\\&
\geq \rho |X_\lambda(t)|_2-\lambda \int_\mathcal{O}d\xi, \end{aligned}\end{equation}
by the Sobolev embedding for $1\leq d\leq 2$
$$|\nabla y|_1\geq \rho |y|_{\frac{d}{d-1}},\ \forall y\in W^{1,1}_0(\mathcal{O}).$$So, plugging (\ref{wow4}) into (\ref{wow3}) we arrive to
\begin{equation}\label{wow5}(|X_\lambda(t)|_2^2+\epsilon)^\frac{1}{2}+\rho \int_0^t (|X_\lambda(s)|_2^2+\epsilon)^{-\frac{1}{2}} |X_\lambda(s)|_2ds-\lambda \int_\mathcal{O}d\xi\int_0^t(|X_\lambda(s)|_2^2+\epsilon)^{-\frac{1}{2}}ds\leq (|x|_2^2+\epsilon)^\frac{1}{2},\ \forall t\geq0.\end{equation}Taking expectation in (\ref{wow5}), we see that by (\ref{e8e}), Fatou's lemma and (\ref{e262}), we may let first $\lambda\rightarrow 0$, then let $\epsilon\rightarrow 0$, to get that
\begin{equation}\label{la1}\mathbb{E}|X(t)|_2+\rho \int_0^t\mathbb{P}[|X(s)|_2>0]ds\leq |x|_2,\ t>0,\end{equation}since
$$\int_0^t \mathbb{P}[|X(s)|_2>0]ds=\sup_{\epsilon>0}\int_0^t\mathbb{E}[|X(s)|_2(|X(s)_2+\epsilon)^{-1}]ds.$$Noticing that $\mathbb{P}[|X(s)|_2>0]=\mathbb{P}[\tau>s]$, it yields by (\ref{la1}) that
\begin{equation}\label{wow7}\mathbb{P}[\tau>t]\leq \frac{1}{\rho t}|x|_2,\end{equation}that immediately leads to (\ref{wow9}), as claimed.
\end{proof}
For the case $d=3$, we take $x\in L^{3}(\mathcal{O})$ such that $x\geq 0$; and obtain a similar result as in Theorem \ref{la17}, but for positive solutions. Firstly, let us show the next lemma.
\begin{lemma}\label{la2}Let $ d= 3$. For each $\lambda \in (0,1]$, let $X_\lambda$ be as in Proposition \ref{la3}, with initial condition $x\in L^{3}(\mathcal{O}),\ x\geq0$. Then, we have
\begin{equation}\label{la4}|X_\lambda(t)|_3^3+6\int_s^t\left(\int_\mathcal{O}X_\lambda(r) \tilde{\psi}_\lambda (\nabla X_\lambda(r)) \cdot \nabla X_\lambda(r) d\xi\right)dr=|X_\lambda(s)|_3^3 \ \mathbb{P}-\text{a.s.},\ \forall 0\leq s\leq t\leq T.\end{equation}In particular, it follows that
\begin{equation}\label{la31}| X_\lambda(t)|_3^3\leq |X_\lambda(s)|_3^3,\ \forall 0\leq s\leq t\leq T.\end{equation}\end{lemma}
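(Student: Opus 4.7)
Since $x\geq 0$, the argument used in the proof of Theorem \ref{Tit1} applies at the approximation level and yields $X_\lambda(t,\xi)\geq 0$ almost everywhere; hence $|X_\lambda|^3 = X_\lambda^3$. The plan is to apply It\^o's formula in (\ref{e261}) to the functional $\Psi(u) := |u|_3^3$. Since $u\mapsto |u|^3$ is $C^2$ on $\mathbb{R}$ with $(|u|^3)' = 3u|u|$ and $(|u|^3)'' = 6|u|$, but $\Psi$ is not $C^2$ on $L^2(\mathcal{O})$, I would first regularize by $\Psi_\delta(u) := \int_\mathcal{O}(u^2+\delta)^{3/2}\,d\xi$, apply the standard Hilbert-space It\^o formula to $\Psi_\delta(X_\lambda)$ (legitimate since $\Psi_\delta$ is Fr\'echet $C^2$ on $L^2(\mathcal{O})$), and then let $\delta\downarrow 0$ using the bounds $X_\lambda \in C([0,T];H_0^1(\mathcal{O}))\cap L^2([0,T];H^2(\mathcal{O}))$ from Proposition \ref{la3}(iii) together with the Sobolev embedding $H_0^1 \hookrightarrow L^6$ valid in $d=3$.

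After the limit $\delta\downarrow 0$, the It\^o expansion of $\Psi(X_\lambda(t))$ contains four contributions. The divergence drift gives, by integration by parts using $X_\lambda|_{\partial\mathcal{O}}=0$,
\[
3\left\langle X_\lambda^2,\,\mathrm{div}\,\tilde{\psi}_\lambda(\nabla X_\lambda)\right\rangle = -6\int_\mathcal{O} X_\lambda\,\tilde{\psi}_\lambda(\nabla X_\lambda)\cdot\nabla X_\lambda\,d\xi,
\]
which is exactly the integrand appearing on the left-hand side of (\ref{la4}). The drift from $\frac{1}{2}\sum_i B_i^2 X_\lambda$ is $\frac{3}{2}\sum_i \langle X_\lambda^2, B_i^2 X_\lambda\rangle$; combining \textbf{(H2)}--\textbf{(H3)} with the Leibniz identity $B_i(X_\lambda^2) = 2 X_\lambda B_i X_\lambda$ and the skew-adjointness argument of (B2) reduces this to $-3\sum_i \int_\mathcal{O} X_\lambda (B_i X_\lambda)^2 d\xi$. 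The It\^o correction coming from the noise $\sum_i B_i X_\lambda\,d\beta_i$ equals $3\sum_i\int_\mathcal{O} X_\lambda (B_i X_\lambda)^2 d\xi$ and cancels the preceding $B_i^2$-drift exactly. Finally, the stochastic integrand is $3\sum_i \langle X_\lambda^2, B_i X_\lambda\rangle = \sum_i \int_\mathcal{O} b_i\cdot\nabla(X_\lambda^3)\,d\xi = 0$, again by \textbf{(H2)}--\textbf{(H3)} as in (B2).

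Collecting these four contributions yields (\ref{la4}). The monotonicity bound (\ref{la31}) follows immediately: $\tilde{\psi}_\lambda(p)\cdot p = \psi_\lambda(p)\cdot p + \lambda|p|^2 \geq 0$ for all $p\in\mathbb{R}^d$, and $X_\lambda \geq 0$, so the time integral in (\ref{la4}) is non-negative.

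The main obstacle is the rigorous justification of It\^o's formula for the non-$C^2$ functional $|\cdot|_3^3$; the regularization $\Psi_\delta$ resolves this provided one can dominate the It\^o correction and the martingale integrand uniformly in $\delta$. Both reduce to checking that $\int_\mathcal{O} X_\lambda (B_i X_\lambda)^2 d\xi$ and $\int_\mathcal{O} X_\lambda^2\,B_i X_\lambda\,d\xi$ are integrable in time, which follows from $X_\lambda\in L^\infty_t L^6_\xi \cap L^2_t H^2_\xi$ as guaranteed by Proposition \ref{la3}(iii).
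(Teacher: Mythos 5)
Your algebraic bookkeeping is exactly the paper's: the same four contributions appear, the It\^o correction $3\sum_i\int_\mathcal{O}X_\lambda(B_iX_\lambda)^2d\xi$ cancels the $B_i^2$-drift, and the martingale integrand $3\langle X_\lambda^2,B_iX_\lambda\rangle=\int_\mathcal{O}B_i(X_\lambda^3)d\xi$ vanishes by skew-adjointness. But the step you yourself flag as the main obstacle is where the argument breaks. The claim that $\Psi_\delta(u)=\int_\mathcal{O}(u^2+\delta)^{3/2}d\xi$ is Fr\'echet $C^2$ on $L^2(\mathcal{O})$ is false: since $(u^2+\delta)^{3/2}\geq|u|^3$, the functional is not even finite on all of $L^2(\mathcal{O})$, its first derivative $h\mapsto 3\int u(u^2+\delta)^{1/2}h\,d\xi$ is not a bounded functional on $L^2$ for general $u\in L^2$, and its second derivative is a bilinear form with the unbounded multiplier $\sim 2|u|$, hence not continuous on $L^2\times L^2$. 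So the ``standard Hilbert-space It\^o formula'' cannot be invoked for $\Psi_\delta(X_\lambda)$ in the variational triple $H_0^1\subset L^2\subset H^{-1}$ in which (\ref{e261}) lives; the regularization in $\delta$ does not remove the genuine difficulty, which is that $|\cdot|_3^3$ (regularized or not) is not a smooth function of the $L^2$-valued semimartingale. This is precisely why It\^o formulas for $L^p$-norms of SPDE solutions are a nontrivial result in their own right.

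The paper circumvents this by invoking Krylov's It\^o formula for the $L^p$-norm of a $W^1_p$-valued process \cite{krylov} with $p=3$, after introducing the stopping times $\theta_K=\inf\{t:\|X_\lambda(t)\|_1>K\}$ and checking, via interpolation, the Sobolev embedding $H^1(\mathcal{O})\subset L^3(\mathcal{O})$ for $d=3$, and the $H^2$-bound (\ref{mos}) from Proposition \ref{la3}(iii), that $\mathbb{E}\int_0^{\theta_K}\|X_\lambda(s)\|_{1,3}^3ds<\infty$. This yields the localized identity (\ref{la10}); the monotonicity of $\tilde{\psi}_\lambda$ then gives $|X_\lambda(t\wedge\theta_K)|_3^3\leq|x|_3^3$ uniformly in $K$, which justifies letting $K\to\infty$. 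Your proposal omits the localization entirely, and it is needed both to meet the integrability hypotheses of Krylov's theorem and to ensure the stochastic integral is a true martingale before one identifies it. If you want to keep a regularization flavour, you would have to combine it with a Galerkin or localization scheme and control the divergence term uniformly, at which point you are essentially reproving the result of \cite{krylov}; citing it directly, as the paper does, is the efficient route. The remaining integrability checks you sketch (that $\nabla(X_\lambda^2)$ and $\nabla(X_\lambda^3)$ lie in $L^2$ in time and space) are indeed the right ones and are carried out in the paper via H\"older, Sobolev embedding and (\ref{mos}).
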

\begin{proof}For $K\in\mathbb{N},\ K>\|x\|_1,$ let us define the $(\mathcal{F}_t)-$stopping time $$\theta_K:=\inf\left\{t\geq0:\ \|X_\lambda(t)\|_1>K\right\}.$$ By interpolation, we immediately see that
$$\mathbb{E}\int_0^{\theta_K}\|X_\lambda(s)\|_{1,3}^3ds\leq C\mathbb{E}\int_0^{\theta_K}\|X_\lambda(s)\|_{H^2(\mathcal{O})}^2|X_\lambda(s)|_3ds\leq CK\mathbb{E}\int_0^T\|X_\lambda(s)\|_{H^2(\mathcal{O})}^2ds<\infty,$$using the Sobolev embedding, in $d=3$,  $H^1(\mathcal{O})\subset L^3(\mathcal{O})$ and Proposition \ref{la3} (iii). Hence, we may apply Theorem 2.1 in \cite{krylov} for
$$\begin{aligned}& f_t:=\tilde{\psi}_\lambda(\nabla X_\lambda(t))(\leq 1+\lambda|\nabla X_\lambda(t)|)\\&
f_t^0:=\frac{1}{2}\sum_{i=1}^NB_i^2X_\lambda(t)\\&
g_t^i:=B_iX_\lambda(t)\\&
p=3,\end{aligned}$$keeping also in mind that the solution is positive, we get the following It\^o's  formula for the $L^3(\mathcal{O})-$norm $\mathbb{P}-$a.s.
\begin{equation}\label{la10}\begin{aligned}|X_\lambda(t\wedge \theta_K)|_3^3=&|X_\lambda(s\wedge\theta_K)|_3^3+3\sum_{i=1}^N\int_{s\wedge\theta_K}^{t\wedge\theta_K}\left(\int_\mathcal{O}X^2_\lambda(r)B_iX_\lambda(r)d\xi\right)d\beta_i(r)\\&
-6\int_{s\wedge\theta_K}^{t\wedge\theta_K}\left(\int_\mathcal{O}
X_\lambda(r)\tilde{\psi}_\lambda(\nabla X_\lambda(r))\cdot \nabla X_\lambda(r)d\xi\right)dr\\&
+3\sum_{i=1}^N\int_{s\wedge\theta_K}^{t\wedge\theta_K}\left(\frac{1}{2}\int_\mathcal{O}X_\lambda^2(r)B_i^2X_\lambda(r)+X_\lambda(r)|B_iX_\lambda(r)|^2d\xi\right)dr,\ 0\leq s\leq t\leq T.\end{aligned}\end{equation}
Taking advantage of the skew-adjointness of $B_i,\ i=1,...,N,$ simple computations show that $\int_{s\wedge\theta_K}^{t\wedge\theta_K}\int_\mathcal{O}X_\lambda^2B_iX_\lambda =0$ and $\int_{s\wedge\theta_K}^{t\wedge\theta_K}\int_\mathcal{O}\frac{1}{2}X_\lambda^2B_i^2X_\lambda+X_\lambda|B_iX_\lambda|^2=0$. This is indeed so. Firstly, we notice that, by the Holder inequality and the Sobolev embeddings, we have
$$\int_{s\wedge\theta_K}^{t\wedge\theta_K}\int_\mathcal{O}|\nabla(X_\lambda^2)|^2d\xi\leq C\int_{s\wedge\theta_K}^{t\wedge\theta_K}|X_\lambda|^2_4|\nabla X_\lambda|^2_4\leq C\int_{s\wedge\theta_K}^{t\wedge\theta_K}|\nabla X_\lambda|^2_2|\Delta X_\lambda|^2_2<CK^2\int_0^T|\Delta X_\lambda|^2_2<\infty$$ by  (\ref{mos}); and
$$\int_{s\wedge\theta_K}^{t\wedge\theta_K}\int_\mathcal{O}|\nabla(X_\lambda^3)|^2d\xi\leq C\int_{s\wedge\theta_K}^{t\wedge\theta_K}|X_\lambda|^4_6|\nabla X_\lambda|^2_6\leq C\int_{s\wedge\theta_K}^{t\wedge\theta_K}|\nabla X_\lambda|^4_2|\Delta X_\lambda|^2_2<CK^4\int_0^T|\Delta X_\lambda|^2_2<\infty$$ again by (\ref{mos}). Then,
$$\int_{s\wedge\theta_K}^{t\wedge\theta_K}\int_\mathcal{O}X_\lambda^2B_iX_\lambda =\frac{1}{3}\int_{s\wedge\theta_K}^{t\wedge\theta_K}\int_\mathcal{O}B_iX^3_\lambda =0,$$and
$$\int_{s\wedge\theta_K}^{t\wedge\theta_K}\int_\mathcal{O}X_\lambda^2B_i^2X_\lambda = -\int_{s\wedge\theta_K}^{t\wedge\theta_K}\int_\mathcal{O}B_iX_\lambda^2 B_iX_\lambda =-\int_{s\wedge\theta_K}^{t\wedge\theta_K}2\int_\mathcal{O}X_\lambda|B_iX_\lambda|^2.$$Consequently, by (\ref{la10}) and the monotonicity of $\tilde{\psi}_\lambda$, it follows that
$$|X_\lambda(t\wedge \theta_K)|_3^3\leq |x|_3^3,\ \forall t\in [0,T].$$Thus, we may let $K\rightarrow \infty$ in (\ref{la10}) to get relation (\ref{la4}), as wanted. Relation (\ref{la31}) is an easy consequence of (\ref{la4}) and the monotonicity of $\tilde{\psi}_\lambda$.
\end{proof}
Now we can state the finite time extinction result for the $d=3$ case.
\begin{theorem}\label{Tit3}Let $d=3$. Let $X$ be as in Theorem \ref{t1}, with initial condition $x\in L^3(\mathcal{O}),\ x\geq0$; and let $\tau:=\inf\left\{t\geq 0:\ |X(t)|_3=0\right\}$. Then we have
\begin{equation}\label{la18}\mathbb{P}[\tau <\infty]=1.\end{equation}
\end{theorem}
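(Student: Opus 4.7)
The plan is to mimic the proof of Theorem \ref{la17} but with the $L^3$-norm playing the role of the $L^2$-norm, leveraging the deterministic Itô-type identity from Lemma \ref{la2} and the Sobolev embedding $W^{1,1}_0(\mathcal{O})\hookrightarrow L^{3/2}(\mathcal{O})$ available in $d=3$. By Theorem \ref{Tit1}, $X_\lambda\geq 0$ (for $\lambda$ small enough via the approximation argument, or by a parallel argument applied directly at the approximating level), so $X_\lambda|\nabla X_\lambda|=\tfrac{1}{2}|\nabla X_\lambda^2|$ pointwise.

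First, I would apply the chain rule (in the variable $t$, since by Lemma \ref{la2} the map $t\mapsto |X_\lambda(t)|_3^3$ is absolutely continuous $\mathbb{P}$-a.s.) to $(|X_\lambda(t)|_3^3+\epsilon)^{1/3}$ for $\epsilon\in(0,1)$, obtaining
\begin{equation*}
(|X_\lambda(t)|_3^3+\epsilon)^{1/3}+2\int_0^t(|X_\lambda(s)|_3^3+\epsilon)^{-2/3}\int_\mathcal{O}X_\lambda\,\tilde\psi_\lambda(\nabla X_\lambda)\cdot\nabla X_\lambda\,d\xi\,ds=(|x|_3^3+\epsilon)^{1/3}.
\end{equation*}
Using $\tilde\psi_\lambda(u)\cdot u\geq |u|-\lambda$, the identity $X_\lambda|\nabla X_\lambda|=\tfrac12|\nabla X_\lambda^2|$, and the Sobolev embedding $|\nabla v|_1\geq\rho|v|_{3/2}$ for $v\in W^{1,1}_0(\mathcal{O})$ applied to $v=X_\lambda^2$, the inner integrand is bounded below by $\tfrac{\rho}{2}|X_\lambda(s)|_3^{2}-\lambda|X_\lambda(s)|_1$. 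Substituting yields
\begin{equation*}
(|X_\lambda(t)|_3^3+\epsilon)^{1/3}+\rho\int_0^t\tfrac{|X_\lambda(s)|_3^2}{(|X_\lambda(s)|_3^3+\epsilon)^{2/3}}\,ds\leq (|x|_3^3+\epsilon)^{1/3}+2\lambda\int_0^t\tfrac{|X_\lambda(s)|_1}{(|X_\lambda(s)|_3^3+\epsilon)^{2/3}}\,ds.
\end{equation*}

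Next I would take expectation and pass first $\lambda\to 0$ and then $\epsilon\to 0$. The last term is bounded by $2\lambda T\,C|x|_3\,\epsilon^{-2/3}\to 0$ as $\lambda\to 0$ (with $\epsilon$ fixed), thanks to the uniform bound $|X_\lambda|_1\leq C|X_\lambda|_3\leq C|x|_3$ coming from Lemma \ref{la2}. For the left-hand side, the uniform pathwise convergence (\ref{e8e}) together with $|X_\lambda(t)|_3\leq |x|_3$ gives (via Fatou in $\omega$) that $\mathbb{E}(|X(t)|_3^3+\epsilon)^{1/3}\leq \liminf_\lambda\mathbb{E}(|X_\lambda(t)|_3^3+\epsilon)^{1/3}$; for the time integral, since $a\mapsto a^2/(a^3+\epsilon)^{2/3}$ is increasing on $[0,\infty)$ and $|X(s)|_3\leq\liminf_\lambda|X_\lambda(s)|_3$, Fatou again yields the analogous estimate. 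Letting $\epsilon\to 0$, dominated convergence handles $(|X(t)|_3^3+\epsilon)^{1/3}\to|X(t)|_3$ and monotone convergence handles $|X(s)|_3^2(|X(s)|_3^3+\epsilon)^{-2/3}\nearrow\mathbf{1}_{\{|X(s)|_3>0\}}$, producing
\begin{equation*}
\mathbb{E}|X(t)|_3+\rho\int_0^t\mathbb{P}[|X(s)|_3>0]\,ds\leq |x|_3,\qquad t>0.
\end{equation*}

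Finally, using that $t\mapsto |X(t)|_3$ is $\mathbb{P}$-a.s. non-increasing (obtained by passing to the limit in (\ref{la31}) via Fatou), the event $\{|X(s)|_3>0\}$ coincides with $\{\tau>s\}$, and $s\mapsto\mathbb{P}[\tau>s]$ is non-increasing, so $\rho t\,\mathbb{P}[\tau>t]\leq\rho\int_0^t\mathbb{P}[\tau>s]\,ds\leq |x|_3$, giving $\mathbb{P}[\tau>t]\leq|x|_3/(\rho t)\to 0$ as $t\to\infty$, i.e.\ (\ref{la18}). The main obstacle I anticipate is the interchange of limits in $\lambda$ and $\epsilon$ with the $L^3$-quantities: unlike the $L^2$ case where (\ref{e8e}) gives direct convergence, here only Fatou-type one-sided inequalities are available for $|X_\lambda|_3$, and one must order the limits $\lambda\to 0$ before $\epsilon\to 0$ so that the $\lambda\epsilon^{-2/3}$ penalty on the right is absorbed before taking the sharp $\epsilon$-limit on the left.
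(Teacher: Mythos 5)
Your proposal follows the paper's proof of Theorem \ref{Tit3} almost step for step: the same It\^o identity from Lemma \ref{la2}, the same $\epsilon$-regularization $(|X_\lambda|_3^3+\epsilon)^{1/3}$, the same lower bound via $\tilde{\psi}_\lambda(u)\cdot u\geq|u|-\lambda$ together with $2X_\lambda|\nabla X_\lambda|=|\nabla(X_\lambda^2)|$ and the embedding $\rho|y|_{3/2}\leq\|y\|_{1,1}$, the same ordering of limits ($\lambda\to0$ first, so that the $\lambda\epsilon^{-2/3}$ penalty disappears before the sharp $\epsilon$-limit), and the same Chebyshev-type conclusion $\mathbb{P}[\tau>t]\leq C|x|_3/(\rho t)$.

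The one place where you genuinely deviate --- and where your argument as written has a gap --- is the treatment of the convergence $X_\lambda\to X$ in $L^3$. You rely throughout on Fatou, i.e.\ on lower semicontinuity of the $L^3$-norm under the $L^2$-convergence (\ref{e8e}). That one-sided information is enough for the left-hand side of your inequality, but it is \emph{not} enough for the claim that $t\mapsto|X(t)|_3$ is non-increasing ``by passing to the limit in (\ref{la31}) via Fatou'': from $|X_\lambda(t)|_3\leq|X_\lambda(s)|_3$ and $|X(r)|_3\leq\liminf_\lambda|X_\lambda(r)|_3$ you cannot deduce $|X(t)|_3\leq|X(s)|_3$, because Fatou gives the inequality in the wrong direction on the right-hand side. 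This monotonicity is exactly what you need to identify $\{|X(s)|_3>0\}$ with $\{\tau>s\}$, so the gap is not cosmetic. The paper closes it by first establishing genuine two-sided convergence: from (\ref{mos}) one gets $\mathbb{E}[\sup_{t}\|X(t)\|_1^2]\leq C\|x\|_1$ (relation (\ref{la11})), and interpolating $|\cdot|_3$ between $|\cdot|_2$ and $\|\cdot\|_1$ upgrades (\ref{e8e}) to $\lim_{\lambda\to0}\mathbb{E}[\sup_{t}|X_\lambda(t)-X(t)|_3^2]=0$ (relation (\ref{lu13})); monotonicity and the supermartingale property of $|X(t)|_3$ then pass to the limit along an a.s.-convergent subsequence. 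Alternatively, you could sidestep the interpolation by noting that $\{|X(s)|_3=0\}=\{|X(s)|_2=0\}$ and that $t\mapsto|X(t)|_2$ is non-increasing, which \emph{does} follow from (\ref{lele1}) and the genuine $L^2$-convergence (\ref{e8e}); either repair should be made explicit. (A minor caveat shared with the paper: Lemma \ref{la2} and (\ref{mos}) are established for $x\in H_0^1(\mathcal{O})$, so a density step in $L^3$ is implicit in both arguments.)
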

\begin{proof}First, let us notice that by (\ref{mos}), for $x\in H_0^1(\mathcal{O})$, it follows by Fatou's lemma that for some constant $C>0$, independent of $x$, we have
\begin{equation}\label{la11}\mathbb{E}\left[\sup_{t\in[0,T]}\|X(t)\|_1^2\right]\leq C\|x\|_1.\end{equation}Then, by interpolation, we get
$$\mathbb{E}[\sup_{t\in[0,T]}|X_\lambda(t)-X(t)|_3^2]\leq C\left(\mathbb{E}[\sup_{t\in[0,T]}|X_\lambda(t)-X(t)|_2^2]\right)^\frac{1}{2}\|x\|_1,$$from where, via (\ref{e8e}), we deduce that
\begin{equation}\label{lu13}\lim_{\lambda\rightarrow 0}\mathbb{E}\left[\sup_{t\in[0,T]}|X_\lambda(t)-X(t)|_3^2\right]=0.\end{equation}

Now, let $\epsilon\in(0,1)$. By (\ref{la4}), we get
\begin{equation}\label{la14}\sqrt[3]{|X_\lambda(t)|_3^3+\epsilon}+2\int_0^t\frac{1}{\sqrt[3]{(|X_\lambda(s)|^3_3+\epsilon)^2}}\int_\mathcal{O}X_\lambda(s)\tilde{\psi}_\lambda(\nabla X_\lambda(s))\cdot \nabla X_\lambda(s)d\xi ds=\sqrt[3]{|x|_3^3+\epsilon},\ t>0.\end{equation}

Notice that $\tilde{\psi}_\lambda(u)\cdot u\geq |u|-\lambda$. Hence, we have
$$\begin{aligned}&\int_\mathcal{O}2X_\lambda\tilde{\psi}_\lambda(\nabla X_\lambda)\cdot\nabla X_\lambda d\xi\geq \int_\mathcal{O}2X_\lambda (|\nabla X_\lambda|-\lambda)d\xi\\&
=\int_\mathcal{O}|\nabla(X_\lambda^ 2)|d\xi-2\lambda\int_\mathcal{O}X_\lambda d\xi\\&
\geq \rho|X_\lambda|_3^2-2\lambda|X_\lambda|_1,\end{aligned}$$
where we have used the embedding $\rho|y|_\frac{3}{2}\leq \|y\|_{1,1},\ \forall y\in W_0^{1,1}(\mathcal{O}).$ This plugged in (\ref{la14}) yields
\begin{equation}\label{la15}\begin{aligned}\sqrt[3]{|X_\lambda(t)|_3^3+\epsilon}+&6\rho\int_0^t\frac{1}{\sqrt[3]{(|X_\lambda(s)|^3_3+\epsilon)^2}}|X_\lambda(s)|_3^2 ds\\&
\leq \sqrt[3]{|x|_3^3+\epsilon}+12\lambda\int_0^t\frac{1}{\sqrt[3]{(|X_\lambda(s)|^3_3+\epsilon)^2}}|X_\lambda(s)|_1ds,\ t>0.\end{aligned}\end{equation}By (\ref{la31}) and (\ref{lu13}) we see that $|X(t)|_3$ is an $L^1-$ limit of supermartingales, hence itself a supermartingale. Then, making use of relation (\ref{lu13}), again, and arguing as in the proof of Theorem \ref{la17}, we let $\lambda\rightarrow 0$, then $\epsilon\rightarrow 0$ in (\ref{la15}), to get that $\mathbb{P}[\tau>t]\leq \frac{|x|_3}{6\rho t},$ which implies (\ref{la18}) as wanted.
\end{proof}
\subsection*{Acknowledgement.}Michael Rockner was supported by the DFG through the CRC 701. Ionu\c t Munteanu was supported by a post-doctoral fellowship of the Alexander von Humboldt Foundation.


\begin{thebibliography}{99}
\bibitem{b5}V. Barbu, Z. Brzezniak, E. Hausenblas, L. Tubaro, Existence and convergence results for infinite dimensional non-linear stochastic equations with multiplicative noise, Stochastic Processes and their Applications 123, 2013, 934-951.
\bibitem{b3} V. Barbu, M. Rockner, Stochastic variational inequalities and applications to the total variation flow perturbed by linear multiplicative noise, Archive for Rotational Mechanics and Analysis, 209 (3), 2013,  797-834.
\bibitem{b4} V. Barbu, G. Da Pratto, M. Rockner, Stochastic nonlinear diffusion equations with singular diffusivity, SIAM Journal of Mathematical Analysis 41(3), 2009, 1106-1120.
\bibitem{t.b.} T. Barbu, V. Barbu, V. Biga, D. Coca, A PDE variational approach to image denoising and restoration, Nonlinear analysis RWA 10(3), 2009,1351-1361.
\bibitem{br1}V. Barbu ,  M. Rockner, An operational approach to stochastic partial differential equations driven by linear multiplicative noise, J. Eur. Math. Soc. 17 (2015), 1789-1815.
\bibitem{br2} V. Barbu, Z. Brzezniak, E. Hausenblas and L. Tubaro, Existence and convergence results for infinite  dimensional  non-linear  stochastic  equations  with  multiplicative  noise,  Stoch.  Proc.
Appl., 123 (2013), no. 3, 934-951.
\bibitem{b2} V. Barbu, Nonlinear Differential Equations of Monotone Type in Banach Spaces, Springer Monographs in Mathematics, Springer, New York, 2010.
\bibitem{brezis} H. Brezis, Functional Analysis, Sobolev Spaces and Partial Differential Equations, Springer, Berlin, 2010.
\bibitem{brez}Z. Brzezniak, M. Capinski, F. Flandoli, A convergence result for stochastic partial differential equations, Stochastics 24 (4), 1988, 423-445.
\bibitem{ciotir}I. Ciotir, J. Tolle, Nonlinear stochastic partial differential equations with singular diffusivity and gradient Stratonovich noise, Preprint 2015.
\bibitem{tolle}. B. Gess, M. Tölle, Multi-valued, singular stochastic evolution inclusions. J. Math. Pures Appl. 101 (2014), no. 6, 789–827.
\bibitem{gess}B. Gess, Finite time extinction for stochastic sign fast diffusion and self-organized criticality, Commun. Math. Phys. 335, 2015, 309-344.
\bibitem{1a} Y. Giga, R. Kobayashi, On constrained equations with singular diffusivity, Methods Appl. Anal. 10 (2), 2003, 253-278.
\bibitem{1b} M.H. Giga, Y. Giga, Generalized motion by non local curvature in the plane, Arch. Rot. Mech. Anal. 159, 2001, 295-333.
\bibitem{1c} Y. Giga, R.V. Kohn, Scale invariant extinction time estimates for some singular diffusion equations, Discret. Contin, Dynam. Syst. Ser. A 30, 2001, 509-535.
\bibitem{1d}R. Kobayashi, Y. Giga, Equations with singular diffusivity, J. Stat.  Phys. 95, 1999, 509-535.
\bibitem{krylov}N.V. Krylov, It\^o's formula for the $L_p-$norm of a stochastic $W_p^1-$valued process, Probability Theory Related Fields 147, 2010, 583-605.
\bibitem{ex} J.L. Lions, E. Magenes, Non-homogeneous boundary value problems and applications, Springer-Verlag, Berlin, 1972.
\bibitem{bok} S.V. Lototsky, B.L. Rozovskii, Stochastic Differential Equations: a Wiener Chaos Approach, in From Stochastic Calculus to Mathematical Finance: The Shiryaev Festschrift, Springer, 2006, 433-507.
\bibitem{m}R. Mikulevicius and B. L. Rozovskii, Global $L_2-$ solutions  of  stochastic  Navier-Stokes  equations,
Ann. Prob. 33 (2005), no. 1, 137-176.
\bibitem{pratto} G. Da Prato, J. Zabczyk, Infinite Dimensional Stochastic Equations, Cambridge University Press, 2001.
\bibitem{mag}M. Sango, Magnetohydrodynamic turbulent flows: Existence results, Phys. D: Nonlin. Phenom. 239 (12), 2010, 912-923.
\bibitem{six} B. Sixou, L. Wang and F. Peyrin, Stochastic diffusion equation with singular diffusivity and
gradient-dependent noise in binary tomography
, J. Phys.:  Conf. Ser. (2014) 542, conference.
\bibitem{wang} L. Wang, B. Sixou, F. Peyrin, Filtered  Stochastic Optimization for Binary Tomography, ISBI 2015, 1604-1607.
\end{thebibliography}
\end{document}